\newtheorem{thm}{Theorem}[section]
\newtheorem{cor}[thm]{Corollary}
\newtheorem{lem}[thm]{Lemma}
\newtheorem{prop}[thm]{Proposition}
\theoremstyle{definition}
\theoremstyle{remark}
\numberwithin{equation}{section}
\newcommand{\mbb}{\mathbb}
\newcommand{\ra}{\rightarrow}
\newcommand{\z}{\zeta}
\newcommand{\pa}{\partial}
\newcommand{\ov}{\overline}
\newcommand{\ep}{\epsilon}
\newcommand{\no}{\noindent}
\newcommand{\al}{\alpha}
\newcommand{\Om}{\Omega}
\newcommand{\cal}{\mathcal}
\begin{document}
\title{Quadrature domains in $\mathbb C^n$}
\keywords{}
\thanks{The first named author was supported by a UGC--CSIR Grant. The second named author was supported by the DST SwarnaJayanti Fellowship 2009--2010 and a UGC--CAS Grant}
\author{Pranav Haridas and Kaushal Verma}

\address{Pranav Haridas: Department of Mathematics, Indian Institute of Science, Bangalore 560 012, India}
\email{pranav10@math.iisc.ernet.in}

\address{Kaushal Verma: Department of Mathematics, Indian Institute of Science, Bangalore 560 012, India}
\email{kverma@math.iisc.ernet.in}

\pagestyle{headings}

\begin{abstract}
We prove two density theorems for quadrature domains in $\mathbb C^n$, $n \ge 2$. It is shown that quadrature domains are dense in the class of all product domains of the form $D \times 
\Omega$, where $D \subset \mathbb C^{n-1}$ is a smoothly bounded domain satisfying Bell's Condition R and $\Omega \subset \mathbb C$ is a smoothly bounded domain and also in the class 
of all smoothly bounded complete Hartogs domains in $\mathbb C^2$.
\end{abstract}

\maketitle

\section{Introduction}

\noindent In this note, by a quadrature domain we will mean a bounded domain $G \subset \mathbb C^n$, $n \ge 1$ with finitely many distinct points $q_1, q_2, \ldots, q_p \in G$, 
positive integers $n_1, n_2, \ldots, n_p$ and complex constants $c_{j \al}$ such that
\begin{equation}
\int_G f(z) = \sum_{j = 1}^p \sum_{\vert \alpha \vert =0}^{n_j - 1} c_{j \al} f^{(\al)}(q_j) \label{quadrature identity}
\end{equation}
for every $f$ in the test class $H^2(G)$, the Hilbert space of square integrable holomorphic functions on $G$. Here, as always, $\al = (\al_1, \al_2, \ldots, \al_n)$ is a multi-index, 
$\vert \al \vert = \al_1 + \al_2 + \ldots + \al_n$ is its length and $f^{(\al)} = \pa^{\vert \al \vert} f/ \pa z_1^{\al_1} \pa z_2^{\al_2} \ldots \pa z_n^{\al_n}$. All volume 
integrals, including the one on the left side of (1.1), are with respect to standard Lebesgue measure on $\mathbb R^{2n}$ and this is not being made explicit for want of brevity.
A relation such as this is called a quadrature identity, the points 
$q_1, q_2, \ldots, q_p$ are the nodes while the integer $N = n_1 + n_2 + \ldots + n_p$ is the order of the quadrature identity. The mean value property of holomorphic functions on a 
disc furnishes a prototype example of such an identity, the node being the centre of the disc while the order is one. Various other test classes can be (and have been) considered as 
well along with an appropriate analog of \eqref{quadrature identity}. However, the theory of such domains is perhaps the richest when we consider planar domains with the 
test class of holomorphic functions on 
it along with a suitable integrability hypothesis -- and this is seen in \cite{AS}, \cite{G} and \cite{S} to cite only a few. We were motivated by a question of Sakai 
(in \cite{EGKP}) that asked for a study of such domains in higher dimensions and for this it is only natural to first ensure that this is a sufficiently rich class. The ball and the 
polydisc in $\mathbb C^n$, $n \ge 2$ are easily verified to be quadrature domains by the mean value property of holomorphic functions. To provide further examples in the plane, let us 
recall 
Gustafsson's theorem from \cite{G} which shows that planar quadrature domains are dense in the class of bounded domains whose boundary consists of finitely many smooth curves and this 
was done by working with its associated Schottky double. Thus, there are many more planar quadrature domains than one might be led to believe. A different proof of this theorem using 
the Bergman kernel was given by Bell in \cite{Be-density} where it was suggested that it might be possible to prove a similar density theorem in higher dimensions.
The purpose of this note is to show that this can indeed be done for a broad class of domains in $\mathbb C^n$, $n \ge 2$.  We present two theorems here which generalize the example 
of the ball and the polydisc in $\mathbb{C}^n$ above.

\medskip

Let us first recall a definition.  For a smoothly bounded domain $G \subset \mbb C^n$, $n \ge 1$, let $P_G$ be the orthogonal projection
from $L^2(G)$ onto $H^2(G)$ and $K_G(z, w)$ the corresponding Bergman kernel. Then $G$ is said to satisfy Condition R if
$P_G(C^{\infty}(\ov G)) \subset C^{\infty}(\ov G)$ or equivalently if for
every integer $s \ge 1$ there exists an integer $m(s) \ge 1$ such that   
\[
P : W^{s + m(s)}(G) \ra W^s(G)
\]
is bounded.  Once again, as always, for every $j \ge 1$, the $W^j(G)$'s are the usual Sobolev spaces on $G$. Examples of domains that satisfy this condition include
smoothly bounded planar domains (see for example \cite{Be-book}), strongly pseudoconvex domains in $\mbb C^n$ (\cite{Ko}), pseudoconvex finite type domains in
$\mbb C^n$ (\cite{Ca}), smoothly bounded complete Hartogs domains in $\mathbb C^2$ (\cite{BoSt}) and complete Reinhardt domains in $\mathbb C^n$ (\cite{BeBo}) among others.

\begin{thm} \label{Main theorem}
Let $D \subset \mbb C^{n-1}$, $n \ge 2$, be a smoothly bounded domain satisfying Condition R and $\Om \subset \mbb C$ a smoothly bounded domain. Then there exist 
quadrature domains arbitrarily close to $D \times \Om$.
\end{thm}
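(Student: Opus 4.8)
The plan is to recast the quadrature property as a statement about the Bergman projection of the constant function $1$. For a smoothly bounded $G$ satisfying Condition R the reproducing property gives $\int_G f = \langle f, 1\rangle = \langle f, P_G(1)\rangle$ for every $f \in H^2(G)$, while each jet functional $f \mapsto f^{(\al)}(q)$ is represented by a holomorphic function $\kappa_{q,\al}$ obtained from a $w$-derivative of $K_G(\cdot,w)$ at $w = q$, so that $f^{(\al)}(q) = \langle f, \kappa_{q,\al}\rangle$. Matching the two representations shows that $G$ is a quadrature domain with nodes $q_j$ and orders $n_j$ precisely when $P_G(1)$ lies in the finite-dimensional span $V_G$ of the functions $\kappa_{q_j,\al}$, $\abs{\al} \le n_j - 1$. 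I would first record that the product $D \times \Om$ inherits Condition R: its Bergman kernel is $K_D \cdot K_\Om$ and its projection is $P_D \otimes P_\Om$, whence $P_{D\times\Om}(1) = P_D(1)\,P_\Om(1) \in C^\infty(\ov{D\times\Om})$.

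Since the Bergman kernel sections span a dense subspace of $H^2(G)$, I can choose a finite linear combination $\Phi = \sum_j c_j \kappa_{q_j,\al_j}$ with $\norm{P_{D\times\Om}(1) - \Phi}$ in $H^2$ as small as desired; this already produces a quadrature identity valid up to a small error functional $f \mapsto \langle f, P_{D\times\Om}(1) - \Phi\rangle$. The difficulty is that a genuine quadrature identity must be \emph{exact}. Moreover, one checks directly that a product $D \times \Om_\epsilon$ can be a quadrature domain only if both factors are: testing on $f = u \otimes v$ forces the functional $u \mapsto \int_D u$ to be a jet functional, i.e. $D$ itself a quadrature domain. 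As $D$ is only assumed to satisfy Condition R, the approximating domains cannot be products; they must be genuine, product-breaking perturbations of $D \times \Om$.

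To reach exactness I would deform the boundary by a small normal displacement $\pa G_t = \{x + t\,h(x)\nu(x)\}$ and track the defect $N(t,h) = \Pi_{G_t}^\perp P_{G_t}(1)$, where $\Pi_{G_t}^\perp$ projects onto the orthogonal complement of the span $V_{G_t}$, the nodes, orders and coefficients $c_{j\al}$ entering as additional finite-dimensional parameters. Condition R guarantees that $P_{G_t}(1)$ and $K_{G_t}$ vary smoothly up to $\ov{G_t}$ under such perturbations, and a Hadamard-type variational formula expresses the linearization $\pa_h N$ as a boundary integral against $h$. The density approximation of the previous step supplies a starting configuration with $\norm{N}$ small, and I would solve $N = 0$ for a small $h$ together with a small adjustment of the nodes and coefficients, obtaining a smoothly bounded quadrature domain that is $C^\infty$-close to $D \times \Om$.

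The main obstacle is exactly this final solvability step. Condition R provides only a Sobolev estimate with an a priori loss $m(s)$ of derivatives, so $\pa_h N$ loses derivatives and a naive inverse function theorem does not apply; closing the scheme will require either a tame estimate feeding a Nash--Moser iteration, or a finite-dimensional Lyapunov--Schmidt reduction in which the finitely many free parameters in the nodes, orders and coefficients are spent to annihilate the residual obstruction after an approximate solve. One must also verify that smooth boundedness and Condition R persist along the deformation and that the displacement stays uniformly small, so that closeness to $D \times \Om$ is retained. Proving the surjectivity of the linearization (or the nondegeneracy of the finite-dimensional reduction) is where the essential work lies.
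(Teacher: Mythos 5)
Your reformulation is correct ($G$ is a quadrature domain precisely when the constant function $1 = P_G(1)$ lies in the Bergman span), and your first step --- approximating $1$ by a finite combination of kernel derivatives, which is Bell's density lemma and needs exactly the Condition R / Bell operator machinery on product domains --- coincides with the paper's starting point. But after that the proposal has a genuine gap: the entire burden is shifted onto the final ``solve $N(t,h)=0$'' step, which you explicitly leave open (``proving the surjectivity of the linearization \dots\ is where the essential work lies''). A boundary-deformation scheme faces precisely the loss-of-derivatives problem you acknowledge, since Condition R only gives boundedness of $P : W^{s+m(s)}(G) \ra W^s(G)$; moreover nothing you have established controls how $K_{G_t}$, the nodes, and the finite-dimensional span $V_{G_t}$ depend on the deformation, so neither the tame estimates needed for a Nash--Moser iteration nor the nondegeneracy needed for a Lyapunov--Schmidt reduction is available, or even plausible, from the stated hypotheses. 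As written this is a program for a proof, not a proof.

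The idea you are missing is that the exactness problem can be avoided entirely: instead of deforming the boundary of $D \times \Om$ and trying to force $1$ into the Bergman span of the \emph{deformed} domain (whose span you cannot compute), the paper constructs the quadrature domain as the image of $D \times \Om$ under a holomorphic map. The key is Bell's criterion (Proposition \ref{quadrature domains and proper mappings}): for a proper holomorphic map $f : \Om_1 \ra \Om_2$, the image $\Om_2$ is a quadrature domain if and only if $\det[\pa f_i/\pa z_j] \in \mathcal K_{\Om_1}$ --- no boundary hypotheses, and the membership is tested on the \emph{fixed} domain $\Om_1 = D \times \Om$, whose Bergman span is explicitly described by the product structure of the kernel. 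One therefore chooses $u \in \mathcal K_{D \times \Om}$ \emph{exactly} (not approximately) and close to $1$, solves $\pa g / \pa z_n = u$ by integrating in the $z_n$-variable (when $\Om$ is multiply connected, first correcting $u$ by terms $a_j(z') K_{\Om}(z_n, \z_j)$ to kill the periods, a correction which stays inside $\mathcal K_{D \times \Om}$), and sets $f = (z_1, \ldots, z_{n-1}, g)$. Since $u$ is close to $1$, $f$ is close to the identity in $A^{\infty}(D \times \Om)$, hence proper onto its image, and $f(D \times \Om)$ is \emph{exactly} a quadrature domain: exactness is automatic because $u$ lies in the span exactly, and the approximation only governs how close the image is to $D \times \Om$. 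Note also that these image domains need not have smooth boundary (the paper points this out), so your insistence on smoothly bounded perturbations is an unnecessary constraint that makes your scheme strictly harder than the problem requires.
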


\noindent It follows that there are plenty of quadrature domains in all higher dimensions. These will turn out to be arbitrarily small perturbations of $D \times \Omega$ with $D, \Om$ 
as above. Moreover, the perturbing maps will be biholomorphisms that are close to the identity on $D \times \Omega$ and which extend smoothly to the boundary of $D \times \Om$. 
By taking $D = \mbb B^{n-1}$, the unit ball in $\mbb C^{n-1}$ and $\Om \subset \mbb C$ with prescribed connectivity, it follows that there are quadrature domains with 
arbitrary topological complexity. 

\medskip

To describe this construction, note that for each integer $s \ge 0$, the domain $G$ also admits a Bell operator of order $s$ namely, there is a linear differential operator 
$\Phi^s_G$ of order $\nu(s) = s(s+1)/2$ with coefficients in $C^{\infty}(\ov G)$ such that
\[
\Phi^s_G : W^{s + \nu(s)}(G) \ra W^s_0(G)
\]
is bounded and $P_G \Phi^s_G = P_G$. Here $W^j_0(G)$ is the closure of $C^{\infty}_0(G)$ in $W^j(G)$.

\medskip

For a multi-index $\al = (\al_1, \al_2, \ldots, \al_n)$ with $\vert \al \vert \ge 0$, let
\footnote{Recall that $K_G(z, w)$ is conjugate holomorphic in $w$ and hence this should not cause any confusion with the notation for mixed partial derivatives introduced just after 
(1.1). In any case, what is meant will be clear from the functions that are involved.}
\[ 
K^{(\al)}_G(z, w) = \pa^{\vert \al \vert} K_G(z, w) / \pa {\ov w_1}^{\al_1} \pa {\ov w_2}^{\al_2} \ldots \pa {\ov w_n}^{\al_n}
\]
(recall that $K_G(z, w)$ is conjugate holomorphic in $w$!) and $K^{(0)}_G(z, w) = K_G(z, w)$. The complex linear span of all functions of $z$ of the form $K^{(\al)}_G(z, a)$ where $a$ 
varies over $G$ and $\al$ varies over all possible multi-indices with $\vert \al \vert \ge 0$ is the Bergman span $\mathcal K_G$ associated to $G$. 
Let $A^{\infty}(G) = \cal O(G) \cap C^{\infty}(\ov G)$.

\medskip

Recent work in \cite{CS} shows that both Condition R and the existence of a Bell operator are properties that remain valid under forming products. Thus, if $D, \Om$ are as in 
Theorem 1.1, then $D \times \Om$ also satisfies Condition R and admits a Bell operator. A direct consequence of this is that $D \times \Om$ satisfies Bell's density lemma 
(\cite{Be-densitylemma}), i.e., the Bergman span $\mathcal K_{D \times \Om}$ is dense in $A^{\infty}(D \times \Om)$. In other words, for a given $h \in A^{\infty}(D \times \Om)$, an 
$\epsilon  > 0$ 
and an integer $M \ge 0$, there exists $k \in \mathcal K_{D \times \Om}$ such that $h - k$ and all its derivatives up to order $M$ are uniformly bounded by $\ep$ on the closure of $D 
\times \Om$.

\medskip

The relevance of the Bergman span becomes evident if the quadrature identity \eqref{quadrature identity} is rewritten using the standard $L^2$ inner product on $G$ which will be 
denoted by angle brackets as usual. Thus we have
\begin{align*}
\langle f, 1\rangle_{G} &= \int_G f(z) \\
       &= \sum_{j = 1}^p \sum_{\vert \alpha \vert =0}^{n_j - 1} c_{j \al} f^{(\al)}(q_j) = \big \langle f, \sum_{j = 1}^p \sum_{\vert \alpha \vert =0}^{n_j - 1} {\ov c}_{j \al} 
K^{(\al)}_G(.,q_j) \big \rangle_{G}
\end{align*}
for all $f \in H^2(G)$ which means that
\[
1 = \sum_{j = 1}^p \sum_{\vert \alpha \vert =0}^{n_j - 1} {\ov c}_{j \al} K^{(\al)}_G(z, q_j)
\]
for all $z \in G$, i.e., the constant function $h(z) \equiv 1$ belongs to the Bergman span associated to $G$. The question of constructing quadrature domains is therefore equivalent to 
finding those domains for which the Bergman span contains the constants. The reader is referred to \cite{Sh} for further discussions along this line. The proof of the 
theorem consists of several steps, the 
first of which is a criterion which ensures that the range of a proper mapping is a quadrature domain. This was already noted in \cite{Be-density}.

\begin{prop} \label{quadrature domains and proper mappings}
Let $\Om_1, \Om_2 \subset \mbb C^n$ be bounded domains and $f : \Om_1 \ra \Om_2$ a proper holomorphic mapping. Then $\Om_2$ is a quadrature domain if and only if the complex Jacobian $u 
= \det[\pa f_i / \pa z_j] \in \mathcal K_{\Om_1}$.
\end{prop}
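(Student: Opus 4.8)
The plan is to reduce everything to the criterion already recorded in the excerpt, that a bounded domain $G$ is a quadrature domain precisely when the constant function $1$ lies in its Bergman span $\mathcal{K}_G$. Thus I must show that $1 \in \mathcal{K}_{\Om_2}$ if and only if $u \in \mathcal{K}_{\Om_1}$. The bridge between the two domains is the pull-back operator $T \colon H^2(\Om_2) \ra H^2(\Om_1)$ given by $T\phi = (\phi \circ f)\,u$. First I would record its basic properties: since a proper holomorphic map is an $m$-sheeted branched covering for some $m$, the change-of-variables formula $\int_{\Om_1}(\psi\circ f)\,\abs{u}^2 = m\int_{\Om_2}\psi$ (the real Jacobian of a holomorphic map being $\abs{u}^2$) yields $\langle T\phi, T\psi\rangle_{\Om_1} = m\,\langle \phi,\psi\rangle_{\Om_2}$, so that $T$ is bounded and $T^{*}T = m\,I$. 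The one elementary observation that drives the whole argument is that $u = T1$, the image of the constant function $1 \in H^2(\Om_2)$.

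For the forward (``if'') direction, suppose $u = \sum_{i,\beta} d_{i\beta}\, K^{(\beta)}_{\Om_1}(\cdot, p_i) \in \mathcal{K}_{\Om_1}$. From $u = T1$ I obtain, for every $\phi \in H^2(\Om_2)$, the identity $m\int_{\Om_2}\phi = m\langle \phi, 1\rangle_{\Om_2} = \langle T\phi, T1\rangle_{\Om_1} = \langle T\phi, u\rangle_{\Om_1}$, and hence, by the reproducing property of the derivative kernels,
\[
m\int_{\Om_2}\phi = \sum_{i,\beta}\ov{d_{i\beta}}\,(T\phi)^{(\beta)}(p_i).
\]
Each $(T\phi)^{(\beta)}(p_i) = \big[(\phi\circ f)\,u\big]^{(\beta)}(p_i)$ is, by the Leibniz rule and the chain rule, a finite linear combination of the derivatives $\phi^{(\gamma)}(f(p_i))$ with coefficients assembled from the jets of $f$ and $u$ at $p_i$. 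Therefore $\int_{\Om_2}\phi$ is a finite combination of derivative evaluations of $\phi$ at the points $f(p_i)\in\Om_2$, which is a quadrature identity; equivalently $1\in\mathcal{K}_{\Om_2}$. This direction never inverts $f$ and so is free of complications.

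For the converse (``only if''), the essential tool is the transformation law for the Bergman kernels under $f$. Writing $F_1,\dots,F_m$ for the local inverse branches over the set of regular values, the same change of variables computes $(T^{*}g)(w)=\sum_{k} g(F_k(w))/u(F_k(w))$; testing against $g\in H^2(\Om_1)$ and invoking the reproducing property then gives, for every regular value $w$, the identity
\[
u(z)\,K_{\Om_2}(f(z),w)=\sum_{k=1}^{m}\ov{\det F_k'(w)}\;K_{\Om_1}(z,F_k(w))
\]
as elements of $H^2(\Om_1)$. Applying $\pa^{\abs{\al}}/\pa\ov w^{\,\al}$ and evaluating at a node $q_j$ (assumed for the moment to be a regular value) expresses $u(z)\,K^{(\al)}_{\Om_2}(f(z),q_j)$ as a finite combination of the functions $K^{(\beta)}_{\Om_1}\big(z,F_k(q_j)\big)$, i.e.\ as an element of $\mathcal{K}_{\Om_1}$. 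Substituting the argument $f(z)$ into the node representation $1=\sum_{j,\al}c_{j\al}K^{(\al)}_{\Om_2}(\cdot,q_j)$ and multiplying by $u(z)$ then displays
\[
u(z)=\sum_{j,\al}c_{j\al}\,u(z)\,K^{(\al)}_{\Om_2}(f(z),q_j)
\]
as a finite sum of elements of $\mathcal{K}_{\Om_1}$, which proves $u\in\mathcal{K}_{\Om_1}$.

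The main obstacle is precisely the branch locus: if some node $q_j$ is a critical value of $f$, the branches $F_k$ cease to be holomorphic there and the factors $\ov{\det F_k'}$ blow up, so the displayed transformation formula cannot be evaluated term by term at $q_j$. I expect to resolve this by a local analysis of the branched covering near $q_j$, grouping the colliding branches according to the preimages $p_i\in f^{-1}(q_j)$ and using the local normal form of $f$; because the sum over all branches is symmetric, hence single-valued, it extends holomorphically across $q_j$, and since only finite jets of the kernel at the $p_i$ intervene, the extension remains an \emph{exact} finite linear combination of functions $K^{(\beta)}_{\Om_1}(z,p_i)$. This is the delicate point, since the statement requires membership in the span $\mathcal{K}_{\Om_1}$ itself rather than merely its closure, so a bare limiting argument would not suffice. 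I note, finally, that in the application to Theorems \ref{Main theorem} and the Hartogs case the perturbing maps $f$ are biholomorphisms, so $m=1$, there is no branch locus, and this last step is vacuous.
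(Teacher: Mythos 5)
Your proposal is correct to the same standard as the paper's own proof, and it runs on the same engine: the weighted pullback $T\phi = u\,(\phi\circ f)$, the change-of-variables identity $\langle T\phi, T\psi\rangle_{\Omega_1} = m\langle \phi,\psi\rangle_{\Omega_2}$, and the symmetrized pushforward $T^{*}g = \sum_k \det F_k'\,(g\circ F_k)$. Your forward direction is the paper's verbatim: write $m\int_{\Omega_2}\phi = \langle T\phi, u\rangle_{\Omega_1}$, expand $u$ in kernel derivatives, and convert $(T\phi)^{(\beta)}(p_i)$ into derivatives of $\phi$ at $f(p_i)$ by Leibniz and the chain rule. In the converse you diverge, but only by dualizing: the paper applies the quadrature identity of $\Omega_2$ to the holomorphic extension of $T^{*}\psi_1$ for an arbitrary $\psi_1\in H^2(\Omega_1)$, rewrites the node evaluations as jet evaluations of $\psi_1$ at the preimages $b_j$, and finishes with the Riesz representation theorem; you instead pull back the representation $1=\sum_{j,\alpha}c_{j\alpha}K_{\Omega_2}^{(\alpha)}(\cdot,q_j)$ through $f$ and use Bell's kernel transformation formula to recognize each term $u\cdot K^{(\alpha)}_{\Omega_2}(f(\cdot),q_j)$ as an element of $\mathcal{K}_{\Omega_1}$. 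Since the transformation formula is itself established by exactly the paper's $T$/$T^{*}$ computation, the two arguments are equivalent in substance; yours has the modest advantage of exhibiting the representing combination for $u$ explicitly and avoiding Riesz representation, while the paper's avoids having to state and differentiate the transformation formula.

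Concerning the one point you flag as delicate — nodes $q_j$ lying in the critical value set $f(E)$ — note that the paper's proof faces exactly the same issue and passes over it silently: its step asserting that $\sum_{j,\alpha}c_{j\alpha}\bigl(\sum_{k}u_k(\psi_1\circ g_k)\bigr)^{(\alpha)}(a_j)$ equals $\sum_{j,\alpha}t_{j\alpha}\psi_1^{(\alpha)}(b_j)$ is immediate only when every node $a_j$ is a regular value of $f$; otherwise it needs precisely the local analysis you describe. Your sketched repair is the right one: the symmetric sum is single valued, extends holomorphically across $f(E)$ by $L^2$ removability, and the functional $\psi_1\mapsto (T^{*}\psi_1)^{(\alpha)}(q_j)$ annihilates every $\psi_1$ vanishing to sufficiently high order at all points of $f^{-1}(q_j)$ (the blow-up of the branch Jacobians being polynomially bounded near $f(E)$), so it factors through a finite-dimensional space of jets and is therefore an exact finite linear combination of derivative evaluations at those points — which is what membership in the algebraic span $\mathcal{K}_{\Omega_1}$, rather than merely its closure, requires. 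So this unfinished step does not put you behind the paper; your account of it is the more candid of the two.
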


Note that there are no hypotheses on the boundaries of $\Om_1$ and $\Om_2$. To use this criterion, choose $u \in \mathcal K_{\Om_1}$. Then we must look for a holomorphic mapping 
$f = (f_1, f_2, \ldots, f_n) : \Om_1 \ra \mbb C^n$ such that
\begin{equation}
u = \det[\pa f_i / \pa z_j].  \label{Jacobian}
\end{equation}
Then $\Om_2 = f(\Om_1)$ will be a quadrature domain provided it is possible to ensure that $f$ is proper. To do this, take $\Om_1 = D \times \Om \subset \mbb C^{n-1} \times 
\mbb C$ as 
in Theorem 1.1 and write $z = (z', z_n)$ where $z'= (z_1, z_2, \ldots, z_{n-1})$ and $z_n$ are coordinates on $D$ and $\Om$ respectively. Pick $u \in \mathcal K_{D \times \Om}$ and 
define $f : D \times \Om \ra \mbb C^n$ as
\begin{equation}
f(z) = (z_1, z_2, \ldots, z_{n-1}, g(z)). \label{Graph of f a proper mapping}
\end{equation}
Then \eqref{Jacobian} is equivalent to finding a holomorphic function $g$ such that
\begin{equation}
\frac{\pa g}{\pa z_n} = u. \label{Partial Differential Equation}
\end{equation}
Two cases arise. First, when $\Om$ is simply connected, $g$ is obtained simply by integrating $u$ along a path that lies in a copy of $\Om$ over a given point in $D$ and then repeating 
this over every point in $D$. The choice of path is irrelevant since $\Om$ is simply connected. Moreover, if $u \in \mathcal K_{D \times \Om}$ is close to the constant function $h(z) = 
1$ in $A^{\infty}(D \times \Om)$ then this representation of $g$ shows that $f$ is close to the identity map (and hence proper!) in $A^{\infty}(D \times \Om)$. Thus $f(D \times \Om)$ is 
a quadrature domain that is arbitrarily close to $D \times \Om$. The other case to consider is when $\Om$ is multiply connected. Note that \eqref{Partial Differential Equation} always admits a local holomorphic 
solution near each point in $D \times \Om$. However, global solutions that are obtained by analytic continuation of such a local solution to all of $D \times \Om$ may not be single 
valued in general. This is because $u$ may have non-zero periods around the inner boundary components of $\Om$. It is possible to rectify this problem by subtracting a suitable function 
from $u$ in such a way that the resulting function still belongs to the Bergman span of $D \times \Om$ and this is inspired by a similar construction in \cite{Be-density}.

\begin{prop} \label{Solution to PDE}
For a given $u \in \mathcal K_{D \times \Om}$ , there exists $v \in \mathcal K_{D \times 
\Om}$ such that
\[
\frac{\pa g}{\pa z_n} = v
\]
admits a single valued holomorphic solution $g$. Consequently, with this choice of $g$, \eqref{Graph of f a proper mapping} defines a single valued holomorphic mapping $f : D \times \Om \ra \mbb C^n$. In particular, if $u\in \mathcal K_{D \times \Om}$ is close to the constant function $h(z) \equiv 1$ in $A^{\infty}(D\times\Omega)$, the image $f(D\times \Omega)$ is a quadrature domain that is close to the  $D\times\Omega$.
\end{prop}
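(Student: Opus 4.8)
The plan is to remove the period obstruction by subtracting from $u$ a correction that lies in $\mathcal{K}_{D \times \Om}$ and carries exactly the same periods as $u$. Write $\gamma_1, \ldots, \gamma_m$ for curves homologous to the inner boundary components of $\Om$, and for a holomorphic $\phi$ on $D \times \Om$ put $P_j[\phi](z') = \int_{\gamma_j}\phi(z', z_n)\,dz_n$. The equation $\pa g/\pa z_n = v$ has a single valued solution on $D \times \Om$ exactly when $P_j[v] \equiv 0$ on $D$ for every $j$, so the goal is to produce $v \in \mathcal{K}_{D \times \Om}$, close to $u$, with all periods vanishing. The tool that makes this tractable is the product formula $K_{D \times \Om}\big((z', z_n), (w', w_n)\big) = K_D(z', w')\,K_\Om(z_n, w_n)$; differentiating in $\ov w$ gives $K^{(\al)}_{D \times \Om} = K^{(\al')}_D\cdot K^{(\al_n)}_\Om$ for $\al = (\al', \al_n)$, so every element of $\mathcal{K}_{D \times \Om}$ is a finite sum of separated products $\phi(z')\psi(z_n)$ with $\phi \in \mathcal{K}_D$ and $\psi \in \mathcal{K}_\Om$.

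First I would establish a planar normalization of periods inside $\mathcal{K}_\Om$. Since $\Om$ is a smoothly bounded planar domain it satisfies Condition R and admits a Bell operator, so Bell's density lemma gives that $\mathcal{K}_\Om$ is dense in $A^{\infty}(\Om)$. As elements of $A^{\infty}(\Om)$ extend smoothly to $\ov\Om$, the period functionals $\psi \mapsto \int_{\gamma_j}\psi\,dz_n$ are continuous on $A^{\infty}(\Om)$, and the functions $(z_n - p_k)^{-1}$, with $p_k$ placed in the $k$-th hole of $\Om$, belong to $A^{\infty}(\Om)$ and have period vectors $\big(\int_{\gamma_j}(z_n - p_k)^{-1}\,dz_n\big)_j$ forming a basis of $\mbb C^m$. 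Hence the period functionals are linearly independent on $A^{\infty}(\Om)$, and by density and continuity they remain independent on the subspace $\mathcal{K}_\Om$; the period map $\psi \mapsto (\int_{\gamma_j}\psi\,dz_n)_{j=1}^m$ is therefore onto $\mbb C^m$, and I may fix $\psi_1, \ldots, \psi_m \in \mathcal{K}_\Om$ with $\int_{\gamma_j}\psi_k\,dz_n = \delta_{jk}$. I expect this to be the crux of the argument: the periods must be cancelled \emph{exactly}, since a holomorphic function with any nonzero period admits no single valued primitive, so mere closeness of $u$ to $1$ cannot by itself do the job — exactness has to come from the surjectivity of this period map.

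With the $\psi_k$ fixed, I would expand $u = \sum_i c_i\,\phi_i(z')\,\eta_i(z_n)$ as a finite sum of separated products, where $\phi_i \in \mathcal{K}_D$ and $\eta_i \in \mathcal{K}_\Om$ are the corresponding derivatives of the factor kernels. Since each $\int_{\gamma_j}\eta_i\,dz_n$ is a constant, the period $P_j[u](z') = \sum_i c_i\big(\int_{\gamma_j}\eta_i\,dz_n\big)\phi_i(z')$ is a finite linear combination of the $\phi_i$, so $P_j[u] \in \mathcal{K}_D$. I then set
\[
w(z', z_n) = \sum_{k=1}^m P_k[u](z')\,\psi_k(z_n), \qquad v = u - w .
\]
Each summand is a product of an element of $\mathcal{K}_D$ with one of $\mathcal{K}_\Om$, so $w$, and hence $v$, lies in $\mathcal{K}_{D \times \Om}$, while $P_j[v] = P_j[u] - \sum_k P_k[u]\,\delta_{jk} = 0$ for every $j$. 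Integrating $v$ in $z_n$ along paths in $\Om$ from a fixed base point then yields a single valued holomorphic $g$ with $\pa g/\pa z_n = v$, so \eqref{Graph of f a proper mapping} defines a single valued $f$. Finally, if $u$ is close to $h \equiv 1$ in $A^{\infty}(D \times \Om)$, then because $\int_{\gamma_j}dz_n = 0$ the periods $P_j[u]$ are small, whence $w$ is small and $v$ is close to $1$; consequently $g$ is close to $z_n$, $f$ is close to the identity and hence proper, and $f(D \times \Om)$ is a quadrature domain close to $D \times \Om$.
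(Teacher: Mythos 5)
Your proof is correct, and its skeleton is the same as the paper's: reduce the problem to cancelling the periods along the inner boundary curves \emph{exactly}, observe via the factorization $K^{(\al)}_{D\times\Om}=K^{(\al')}_D\,K^{(\al_n)}_\Om$ that each period $P_j[u]$ lies in $\mathcal K_D$, and subtract from $u$ a correction lying in $\mathcal K_D\otimes\mathcal K_\Om\subset\mathcal K_{D\times\Om}$ carrying the same periods, after which integration in $z_n$ gives a single valued $g$ and the rest follows. The genuine difference is how the planar building blocks are produced. The paper quotes Lemma 2.1 of \cite{Be-density}, which supplies points $\z_1,\ldots,\z_{m-1}\in\Om$ whose kernel functions $K_\Om(\cdot,\z_j)$ have a non-singular period matrix $M$, and then solves the constant-coefficient system $C(z')=MA(z')$; since the correction is built from kernel functions at these points, the membership $v\in\mathcal K_{D\times\Om}$ is read off directly as a combination of terms $K^{(\al_1,\ldots,\al_{n-1},0)}_{D\times\Om}\bigl(z,(w',\z_j)\bigr)$. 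You instead prove the needed surjectivity of the period map on $\mathcal K_\Om$ from scratch: the rational functions $(z_n-p_k)^{-1}$ show it is onto the full space of period vectors on $A^{\infty}(\Om)$, and since the period functionals are continuous and $\mathcal K_\Om$ is dense in $A^{\infty}(\Om)$ (Bell's density lemma, valid for smoothly bounded planar domains), the image of $\mathcal K_\Om$ under the period map, being a linear and hence closed subspace, must be everything; choosing a dual basis $\psi_k$ then replaces the inversion of $M$. This is a legitimate substitute for the cited lemma: your route is self-contained at the price of invoking the density lemma for $\Om$ (which is available anyway), while the paper's route keeps the correction concentrated at finitely many points and so is slightly more explicit about where the new quadrature nodes come from. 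Two small normalizations: add the base-point constant $a$ to $g$, as in \eqref{Partial differential equation solution}, so that $f$ is close to the identity itself rather than to a translate of it; and note that, like the paper, you assert without proof that a holomorphic map $A^{\infty}$-close to the identity is proper onto its image, so on that point there is no gap relative to the paper's own standard of rigor.
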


Thus in both cases there are plenty of quadrature domains as close to $D \times \Om$ as desired. The non-explicit examples of 
quadrature domains obtained this way will not have a product structure in general. They do however have non-smooth boundaries. 

\begin{thm} \label{Hartogs domains}
Let $ D \in \mathbb{C}^2$ be a smoothly bounded complete Hartogs domains. Then there exist quadrature domains arbitrarily close to $D$. The same is true for the complex ellipsoid
\[
\mathcal E = \big\{ z \in \mathbb C^n : \vert z_1 \vert^{2m_1} + \vert z_2 \vert^{2m_2} + \ldots + \vert z_n \vert^{2m_n} < 1 \big\} 
\]
where $m_1, m_2, \ldots, m_n$ are positive integers.
\end{thm}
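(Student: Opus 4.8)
The plan is to run exactly the scheme behind Theorem~\ref{Main theorem} and Proposition~\ref{Solution to PDE}, exploiting the structural fact that a complete Hartogs domain has \emph{disc} fibres, so the period obstruction that forced the detour of Proposition~\ref{Solution to PDE} simply does not arise. Writing $z = (z_1, z_2)$, a smoothly bounded complete Hartogs domain can be described as
\[
D = \big\{ (z_1, z_2) : z_1 \in \pi(D), \ \vert z_2 \vert < \rho(z_1) \big\},
\]
so that, by completeness, the fibre over each $z_1 \in \pi(D)$ contains the origin and is a disc centred there --- in particular it is simply connected, no matter how complicated $\pi(D)$ is.

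First I would record that $D$ satisfies Condition R, which is precisely the content of \cite{BoSt}, and hence admits a Bell operator; thus Bell's density lemma applies and $\mathcal K_D$ is dense in $A^{\infty}(D)$. Given $\ep > 0$ and an integer $M \ge 1$, this produces $u \in \mathcal K_D$ with $u - 1$ and all its derivatives up to order $M$ bounded by $\ep$ on $\ov D$. Note $\mathcal K_D \subset A^{\infty}(D)$ since, under Condition R, $K_D(\cdot, a)$ and its conjugate derivatives extend smoothly to $\ov D$.

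Next, precisely as in the simply connected case of Theorem~\ref{Main theorem}, I would solve $\pa g / \pa z_2 = u$ by integrating in the fibre direction,
\[
g(z_1, z_2) = \int_0^{z_2} u(z_1, \z)\, d\z,
\]
along the radial segment from $0$ to $z_2$, which lies inside the disc fibre. Because each fibre is simply connected, $g$ is single valued; differentiating under the integral sign shows $g \in \cal O(D)$, that $g$ extends smoothly to $\ov D$, and that $g - z_2$ is small in $C^M(\ov D)$. Setting $f(z) = (z_1, g(z_1, z_2))$ gives $\det[\pa f_i / \pa z_j] = \pa g / \pa z_2 = u \in \mathcal K_D$, so once $f$ is seen to be proper, Proposition~\ref{quadrature domains and proper mappings} identifies $f(D)$ as a quadrature domain, and the closeness of $f$ to the identity makes $f(D)$ as close to $D$ as desired.

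For the ellipsoid $\mathcal E \subset \mbb C^n$ I would run the identical argument, viewing $\mathcal E$ as a complete Hartogs domain in $z_n$ over the base $\{ \vert z_1 \vert^{2m_1} + \cdots + \vert z_{n-1} \vert^{2m_{n-1}} < 1 \}$: the fibre over $(z_1, \ldots, z_{n-1})$ is again a disc $\{ \vert z_n \vert^{2m_n} < 1 - \sum_{k<n} \vert z_k \vert^{2m_k} \}$, hence simply connected, while Condition R now comes from $\mathcal E$ being a complete Reinhardt domain \cite{BeBo}. The genuinely structural point is just this disc-fibre observation, which reduces everything to the easy case of the main theorem; the one step that still requires care --- and which I would treat as the main technical obstacle --- is the passage from ``$f$ close to the identity in $A^{\infty}(D)$'' to ``$f$ proper,'' i.e. checking that $f$ extends to $\ov D$ and pushes $\pa D$ close enough to $\pa f(D)$ that preimages of compacta remain compact. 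Since this is the same verification underlying the properness assertion in Theorem~\ref{Main theorem}, I would import it rather than reprove it.
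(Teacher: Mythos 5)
Your proposal is correct and takes essentially the same route as the paper: Condition R (from \cite{BoSt} for the Hartogs domain, \cite{BeBo} for the ellipsoid) together with Bell's density lemma produces $u \in \mathcal{K}_D$ close to $1$ in $A^{\infty}(D)$, the disc fibres over the plane of symmetry let you integrate radially from the base point to get a single-valued holomorphic solution of $\pa g/\pa z_2 = u$, and then $f(z) = (z_1, g(z))$ is a small perturbation of the identity whose image is a quadrature domain by Proposition \ref{quadrature domains and proper mappings}. The paper likewise treats the properness of $f$ as immediate from its closeness to the identity, so your decision to import that verification from Theorem \ref{Main theorem} rather than reprove it matches the paper's treatment.
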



\section{Proof of Proposition \ref{quadrature domains and proper mappings}}

\noindent The ideas are similar to those used in the proof of the transformation formula for the Bergman projection under proper holomorphic mappings. Here are the details. 
Let $\Omega_1$ and $\Omega_2$ be bounded domains in $\mathbb{C}^n$, $f : \Omega_1 \rightarrow \Omega_2$ a proper holomorphic mapping and $u = \det[\pa f_i / \pa z_j]$ its complex Jacobian.  
We shall denote by $E$ the set $\{z \in \Omega_1 : u(z) = 0 \}$ which is a proper subvariety of $\Omega_1$. 
There exists a positive integer $m$ such that $f$ is a $m$ to one covering map of $\Omega_1 - E$ onto $\Omega_2 - f(E)$. Let $g_1, g_2,\ldots, g_m$ be the local inverses of $f$  
and $u_1, u_2,\ldots, u_m$ the respective Jacobians all of which are locally well defined on $\Omega_2-f(E)$.

\medskip

Let $\psi_j \in L^{2}(\Omega_j)$ for $j = 1,2$. The first thing to do is to check that $u(\psi_2 \circ f) \in L^{2}(\Omega_1)$ and that 
$\sum_{k=1}^{m} u_k(\psi_1\circ g_k) \in L^{2}(\Omega_2)$. 
Since $\det J_{\mathbb{R}}f = |\det[\partial f_i/\partial z_j]|^2 = |u|^2$, we have the following change of variables formula 
\begin{equation}
m\int\limits_{\Omega_2}|\psi_2|^{2} = m\int\limits_{\Omega_2-f(E)}|\psi_2|^{2} = \int\limits_{\Omega_1-E}|u|^2|\psi_2 \circ f|^{2} =  \int\limits_{\Omega_1}|u|^2|\psi_2 \circ f|^{2} \label{Change of Variables}
\end{equation}
where the first and last equalities hold since $E$ and $f(E)$ have measure zero and the second equality follows because $f$ is a $m$ to one covering map of 
$\Omega_1 - E$ onto $\Omega_2 - f(E)$. This shows that the $L^{2}$-norm of $u(\psi_2\circ f)$ is equal to $\sqrt{m}$ times the $L^2$-norm of $\psi_2$. For the 
second claim, consider the symmetric function $\sum_{k=1}^{m} u_k(\psi_1\circ g_k)$ which is well defined on $\Omega_2-f(E)$. 
Since $f(E)$ is a measure zero set, it is defined almost everywhere on $\Omega_2$. Then, 
\begin{align*}
\int\limits_{\Omega_2}\big\vert \sum_{k=1}^{m} u_k(\psi_1\circ g_k) \big\vert^{2} &= \int\limits_{\Omega_2-f(E)}\big \vert\sum_{k=1}^{m} u_k(\psi_1\circ g_k)\big \vert^{2} \\
&\leq \int\limits_{\Omega_2-f(E)}m\sum_{k=1}^{m} \lvert u_k(\psi_1\circ g_k)  \rvert^{2} \\
&= m \int\limits_{\Omega_1-E}|\psi_1|^{2}
\end{align*}
where the second step follows from the Cauchy-Schwarz inequality. Since $\psi_1 \in L^2(\Omega_1)$ and $m$ is finite, we conclude that  
$\sum_{k=1}^{m} u_k(\psi_1 \circ g_k) \in L^{2}(\Omega_2)$. 

\medskip

Now let us assume that the complex Jacobian $u = \det[\pa f_i / \pa z_j] \in \mathcal K_{\Om_1}$. For $\psi_2 \in H^2(\Omega_2)$, a similar calculation as 
in \eqref{Change of Variables} shows that
\begin{equation}
m\int\limits_{\Omega_2}\psi_2 = m\int\limits_{\Omega_2-f(E)}\psi_2 = \int\limits_{\Omega_1-E}|u|^2(\psi\circ f) =  \int\limits_{\Omega_1}|u|^2(\psi\circ f). \label{Change of Variables 2}
\end{equation}
Continuing with an arbitrary $\psi_2 \in H^2(\Omega_2)$, since $u(\psi_2 \circ f) \in L^{2}(\Omega_1)$ we can re-write \eqref{Change of Variables 2} as 
\begin{align*}
 m\int\limits_{\Omega_2}\psi_2 = \big\langle u(\psi_2\circ f),u \big\rangle_{\Omega_1}
\end{align*} 	
We have assumed that $u \in \mathcal{K}_1$. Hence, we can write
\begin{align*}
u(\zeta) = \sum_{j,\alpha} t_{j\alpha}K_{\Omega_1}^{(\alpha)}(\zeta, b_j) \hspace{10mm} 
\end{align*}
where $t_{j\alpha} \in \mathbb{C}$. Thus \eqref{Change of Variables 2} can be rewritten as
\begin{align*}
\int\limits_{\Omega_1}|u|^2(\psi\circ f) &= \big\langle u(\psi_2\circ f),u \big\rangle_{\Omega_1} \\
&= \big\langle u(\psi_2\circ f),\sum_{j,\alpha} t_{j\alpha}K_{\Omega_1}^{(\alpha)}(\cdot, b_j) \big\rangle_{\Omega_1} \\
&= \sum_{j,\alpha} \overline{t_{j\alpha}}\big\langle u(\psi_2\circ f),K_{\Omega_1}^{(\alpha)}(\cdot, b_j) \big\rangle_{\Omega_1} \\
&= \sum_{j,\alpha} \overline{t_{j\alpha}}  (u(\psi_2\circ f))^{(\alpha)}(b_j) \\
&= \sum_{j,\alpha} c_{j\alpha}\psi_2^{(\alpha)}(a_{j})
\end{align*}
where $a_j = f(b_j)$. Observe that the constants $c_{j\alpha}$ depend only on $t_{j\beta}$, $u^{(\beta)}(b_j)$ and $f^{(\beta)}(b_j)$ 
for $\vert\beta \vert \leq \vert \alpha \vert$.  Since this is true for every $\psi_2 \in H^2(\Omega_2)$, it follows that $\Omega_2$ is a quadrature domain with nodes 
at $a_1, a_2,\ldots, a_l$.

\medskip

Conversely, suppose that $\Omega_2$ is a quadrature domain. For every $\psi_1 \in H^2(\Omega_1)$, 
\[
\sum_{k=1}^{m} u_k(\psi_1 \circ g_k) \in \mathcal{O}(\Omega_2-f(E)) \cap L^{2}(\Omega_2-f(E)).
\]
Since $\sum_{k=1}^{m} u_k(\psi_1 \circ g_k) \in L^{2}(\Omega_2)$, it has a holomorphic extension to $\Omega_2$. Therefore
\begin{align*}
\langle \psi_1, u \rangle_{\Omega_1} &= \int\limits_{\Omega_1} \psi_1\overline{u} = \int\limits_{\Omega_1-E}  \psi_1\overline{u} \\
&= \int\limits_{\Omega_2-f(E)} \sum_{k=1}^{m}(\psi_1\circ g_{k})(\overline{u\circ g_{k}})|u_k|^2 \\
&= \int\limits_{\Omega_2-f(E)} \sum_{k=1}^{m}u_k(\psi_1\circ g_{k})	 \\
&= \big\langle \sum_{k=1}^{m} u_k(\psi_1 \circ g_k), 1 \big\rangle_{\Omega_2} 
\end{align*}
Using the fact that $\Omega_2$ is a quadrature domain, we get
\begin{align*}
\langle \psi_1, u \rangle_{\Omega_1} &= \sum_{j,\alpha} c_{j\alpha}\bigg(\sum_{k=1}^{m}u_k(\psi_1 \circ g_k)\bigg)^{(\alpha)}(a_{j}) \\
\end{align*}
which implies that
\begin{align*}
\langle \psi_1, u \rangle_{\Omega_1} &= \sum_{j,\alpha} t_{j\alpha}\psi_1^{(\alpha)} (b_{j})\\
&= \big\langle \psi_1,\sum_{j,\alpha} \overline{t_{j\alpha}}K_{\Omega_1}^{(\alpha)}(\cdot, b_j) \big\rangle_{\Omega_1}
\end{align*}
The Riesz representation theorem shows that $u = \sum_{j,\alpha} t_{j\alpha}K_{\Omega_1}^{(\alpha)}(\cdot, b_j)$ which in turn implies that $u$ is an element of $\mathcal{K}_{\Omega_1}$.


\section{The Density Lemma on Product Domains}

\no We begin by quickly reviewing a few basic facts about function spaces on product domains that will be useful in understanding the Bergman projection and the Bell operator on them. 
Since these have been studied in detail in \cite{CS} we do so solely to make this exposition complete. We will restrict ourselves to product domains each factor of which is 
smoothly bounded though much of what follows holds in far greater generality. For $1 \le j \le k$, let $\Om_j \subset \mbb C^{n_j}$, $n_j \ge 1$, be a smoothly bounded domain and 
let $\Om = \Om_1 \times \Om_2 \times \ldots \times \Om_k \subset \mbb C^n$ be the product domain where $n = n_1 + n_2 + \ldots + n_k$. Then $\Om$ has Lipschitz boundary and a basic 
fact about such domains is the existence of a continuous linear extension operator from $W^s(\Om)$ to $W^s(\mbb C^n)$. Combining this with the Sobolev embedding of $W^s(\mbb C^n)$ 
for large $s$ into a space of smooth functions shows that
\[
C^{\infty}(\ov \Om) = \bigcap_{s = 0}^{\infty} W^s(\Om)
\]
and further, the usual Fr\`{e}chet topology on $C^{\infty}(\ov \Om)$ given by the $C^s$-norms coincides with the Fr\`{e}chet topology given by the $W^s$-norms. If $f_j$ is a complex 
valued function on $\Om_j$, let
\[
f_1 \otimes f_2 \otimes \cdots \otimes f_k
\]
be their tensor product, which by definition is the function
\[
f(z_1, z_2, \ldots, z_k) = f_1(z_1) \cdot f_2(z_2) \cdot \cdots \cdot f_k(z_k)
\]
defined on $\Om$, where $z_j \in \Om_j$. For each $j$, let $X_j$ be a complex vector space of functions on $\Om_j$. Their algebraic tensor product
\[
X_1 \otimes X_2 \otimes \cdots \otimes X_k = \bigotimes_{j =1}^k X_j
\]
is a complex vector space of functions on $\Om$, each element of which is written as a finite linear combination of tensor products of the form $f_1 \otimes f_2 \otimes \cdots \otimes 
f_k$ where $f_j \in X_j$. It is known that (see \cite{H} for example) the algebraic tensor product $\bigotimes_{j=1}^k C^{\infty}_0(\Om_j)$ is dense in the $C^{\infty}$-topology in 
$C^{\infty}_0(\Om)$. Further, the algebraic tensor product $\bigotimes_{j=1}^k C^{\infty}(\ov \Om_j)$ is dense in $C^{\infty}(\ov \Om)$ in the Fr\`{e}chet topology.

\medskip

If $H_j$ is a Hilbert space of functions on $\Om_j$, the algebraic tensor product $H = \bigotimes_{j=1}^k H_j$ comes with a natural inner product that is given on functions which are tensor products as
\[
\langle f_1 \otimes f_2 \otimes \cdots \otimes f_k, g_1 \otimes g_2 \otimes \cdots \otimes g_k \rangle_H = \prod_{j=1}^k \langle f_j, g_j \rangle_{H_j}.
\]
We define the Hilbert Tensor Product $\widehat{\bigotimes}_{j=1}^k H_j$ to be the completion of $H$ under the above inner product. As an example, 
if $\mathcal{F}_{j}(\Om_{j})$ is a function space on $\Om_{j}$ and $\mathfrak{F}_{j}(\Om_{j})$ the Hilbert space of differential forms on $\Omega_{j}$ with coefficients in $\mathcal{F}_{j}(\Om_{j})$, then it can be shown that $$\widehat{\bigotimes}_{j = 1}^{k}\mathfrak{F}_{j}(\Om_{j})\approx \mathfrak{F}(\Omega_{1}\times\ldots\times\Omega_{k})$$ where $\mathfrak{F}(\Omega_{1}\times\ldots\times\Omega_{k})$ is the Hilbert space of differential 
forms on $\Omega_{1}\times\ldots\times\Omega_{k}$ with coefficients in 
$\widehat{\bigotimes}_{j =1}^{k}\mathcal{F}_{j}(\Omega_{j})$ and the homomorphism is a Hilbert space isometry. 
If $T_{j} : H_{j} \rightarrow H_{j}^{'}$ are bounded 
linear maps between Hilbert spaces, then the map 
$$T = T_{1}\otimes \ldots \otimes T_{k} : \bigotimes_{j=1}^k H_j \rightarrow \bigotimes_{j=1}^k H^{'}_j$$ 
given by $T(f_1\otimes f_2 \otimes\ldots\otimes f_{k}) = T_{1}(f_{1})\otimes\ldots\otimes T_{k}(f_{k})$ on decomposable tensors, extends naturally to a bounded linear map 
$$T_{1}\widehat{\otimes} \ldots \widehat{\otimes} T_{k} :\widehat{\bigotimes}_{j=1}^k H_j \rightarrow \widehat{\bigotimes}_{j=1}^k H^{'}_j$$
We shall represent a multi-index $\alpha$ as $\alpha[1] +\ldots+\alpha[k]$ where 
\[
\alpha[j] = (\underbrace{0,\ldots,0}_{n_1+\ldots+n_{j-1}}, \alpha_{j1},\ldots, \alpha_{jn_j},\underbrace{0,\ldots,0}_{n_{j+1}+\ldots+n_k})
\]
denotes the $n_j$ components of $\alpha$ corresponding to $\Omega_j$. Recall the Partial Sobolev norms from \cite{CS}:
\begin{equation}
\Vert f \Vert_{\widetilde{W}^s(\Omega)}^2 =  \sum\limits_{\substack{|\alpha[j]| \leq s  \\  1 \leq j \leq k}} \Vert D^{\alpha}f \Vert_{L^2(\Omega)}^2. \label{Partial Sobelov Norm}
\end{equation}
The Partial Sobolev space $\widetilde{W}^s(\Omega)$ is defined as the completion of $C^{\infty}(\overline{\Omega})$ under the norm $\eqref{Partial Sobelov Norm}$ given above. 
Alternately, we can define the Partial Sobolev space as $\widetilde{W}^s(\Omega) = \widehat{\bigotimes}_{j = 1}^{k}W^{s}(\Omega_j)$. It was shown in \cite{CS} that the usual Sobolev 
spaces and the Partial Sobolev spaces are essentially the same for Lipschitz domains. 

\begin{lem}\label{Partial Sobelov space containment}
If each $\Omega_j$ has Lipschitz boundary, then we have
\[W^{ks}(\Omega) \subsetneq \widetilde{W}^{s}(\Omega) \subsetneq W^{s}(\Omega) \text{, and} \]

\[W^{ks}_0(\Omega) \subsetneq \widetilde{W}^{s}_0(\Omega) \subsetneq W^{s}_0(\Omega).\]
where the inclusions are continuous.
\end{lem}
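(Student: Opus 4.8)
The plan is to reduce everything to a comparison of the index sets that define the three norms, and then to produce explicit witnesses for the two strict inclusions. Throughout I assume $k \ge 2$ and $s \ge 1$, since for $k=1$ or $s=0$ all three spaces coincide and the stated inclusions become equalities.

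First the continuous inclusions, which are purely combinatorial. Writing $\alpha = \alpha[1] + \cdots + \alpha[k]$ one has $|\alpha[j]| \le |\alpha| = \sum_i |\alpha[i]|$, so
\[
\{\alpha : |\alpha| \le s\} \subseteq \{\alpha : |\alpha[j]| \le s,\ 1 \le j \le k\} \subseteq \{\alpha : |\alpha| \le ks\}.
\]
Since each of the three norms is the $L^2$-sum of $\|D^\alpha f\|$ over the corresponding index set, these containments give $\|f\|_{W^s(\Omega)} \le \|f\|_{\widetilde W^s(\Omega)} \le \|f\|_{W^{ks}(\Omega)}$ directly, with no loss of constant. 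Hence the identity map furnishes continuous inclusions $W^{ks}(\Omega) \hookrightarrow \widetilde W^s(\Omega) \hookrightarrow W^s(\Omega)$, and because the same norm inequalities hold on $C_0^\infty(\Omega)$, a sequence that is Cauchy in a stronger norm is Cauchy in a weaker one, so passing to closures yields the corresponding continuous inclusions $W^{ks}_0(\Omega) \hookrightarrow \widetilde W^s_0(\Omega) \hookrightarrow W^s_0(\Omega)$.

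For the strict inclusion $W^{ks}(\Omega) \subsetneq \widetilde W^s(\Omega)$ I would exhibit a tensor product. Choose $f_1 \in W^s(\Omega_1) \setminus W^{s+1}(\Omega_1)$ and fix nonzero $\chi_j \in C^\infty(\overline{\Omega_j})$ for $2 \le j \le k$, and set $f = f_1 \otimes \chi_2 \otimes \cdots \otimes \chi_k$. Every derivative $D^\alpha f$ with $|\alpha[j]| \le s$ factors as $D^{\alpha[1]} f_1 \cdot \prod_{j \ge 2} D^{\alpha[j]} \chi_j$ with $|\alpha[1]| \le s$, hence lies in $L^2(\Omega)$, so $f \in \widetilde W^s(\Omega)$. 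On the other hand $f_1$ has a derivative of order $s+1$ that is not square integrable on $\Omega_1$, and since $s+1 \le ks$, the corresponding derivative of $f$ has total order $\le ks$ but fails to be in $L^2(\Omega)$, so $f \notin W^{ks}(\Omega)$. The genuinely delicate inclusion is $\widetilde W^s(\Omega) \subsetneq W^s(\Omega)$, and I expect this to be the main obstacle: any pure tensor product $f_1 \otimes \cdots \otimes f_k$ of $W^s$-factors automatically lies in $\widetilde W^s(\Omega)$, so no tensor example can separate the two spaces, and one must use a function that is genuinely diagonal in two factors. Concretely, pick real coordinates $x_1, x_2$ from $\Omega_1, \Omega_2$, choose a one-variable function $\phi \in W^s \setminus W^{s+1}$ with its singularity in the interior of the range of $x_1 + x_2$, and set $f(z) = \phi(x_1 + x_2)$. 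Each $D^\alpha f$ with $|\alpha| \le s$ equals $\phi^{(|\alpha|)}(x_1+x_2)$ and is square integrable, so $f \in W^s(\Omega)$; but the admissible partial derivative taking order $s$ in $x_1$ and order $1$ in $x_2$ produces $\phi^{(s+1)}(x_1+x_2)$, which is not in $L^2(\Omega)$, so $f \notin \widetilde W^s(\Omega)$.

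Finally, for the strictness of the $W_0$-versions I would arrange both examples to have compact support in $\Omega$. For $W^{ks}_0(\Omega) \subsetneq \widetilde W^s_0(\Omega)$, take $f_1$ a compactly supported element of $W^s_0(\Omega_1) \setminus W^{s+1}(\Omega_1)$ (an interior singularity times a cutoff) and $\chi_j \in C^\infty_0(\Omega_j)$ for $j \ge 2$; approximating $f_1$ by $C_0^\infty(\Omega_1)$ functions in the $W^s(\Omega_1)$ norm and tensoring shows $f = f_1 \otimes \chi_2 \otimes \cdots \in \widetilde W^s_0(\Omega)$, while $f \notin W^{ks}(\Omega) \supseteq W^{ks}_0(\Omega)$. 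For $\widetilde W^s_0(\Omega) \subsetneq W^s_0(\Omega)$, multiply the diagonal example $\phi(x_1+x_2)$ by a cutoff $\eta \in C_0^\infty(\Omega)$ equal to $1$ near the singular set; the resulting compactly supported $W^s$-function lies in $W^s_0(\Omega)$ by mollification, yet it still fails to lie in $\widetilde W^s(\Omega) \supseteq \widetilde W^s_0(\Omega)$. This completes both strict containments for the $W_0$-spaces.
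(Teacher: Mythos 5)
Your proposal is correct in outline, but note that the paper itself contains no proof of this lemma: it is imported wholesale from Chakrabarti--Shaw \cite{CS} (``It was shown in \cite{CS} that the usual Sobolev spaces and the Partial Sobolev spaces are essentially the same for Lipschitz domains''), so what you have written is necessarily a different route --- a self-contained argument replacing a citation. Your index-set comparison giving the continuous inclusions is exactly the mechanism underlying the \cite{CS} result, and your treatment of strictness is the genuinely new content: the observation that a pure tensor of $W^s$-factors always lies in $\widetilde{W}^s(\Omega)$, so that tensors can witness $W^{ks}(\Omega) \subsetneq \widetilde{W}^s(\Omega)$ but can never separate $\widetilde{W}^s(\Omega)$ from $W^s(\Omega)$, correctly identifies why the diagonal example $\phi(x_1+x_2)$ is needed, and that example does work (the admissible index with $s$ derivatives in $x_1$ and one in $x_2$ yields $\phi^{(s+1)}(x_1+x_2)$, which fails to be square integrable along a level set interior to $\Omega$). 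Your restriction to $k \ge 2$, $s \ge 1$ is also right, and in fact repairs a small imprecision in the statement, since for $s = 0$ all three spaces are $L^2(\Omega)$. Two points should be made explicit, because they are the only places the hypotheses and definitions really enter. First, $\widetilde{W}^s(\Omega)$ is \emph{defined} as a completion of $C^{\infty}(\overline{\Omega})$, so the inclusion $W^{ks}(\Omega) \subseteq \widetilde{W}^s(\Omega)$ is not purely formal as your phrase ``the identity map furnishes continuous inclusions'' suggests: it requires $C^{\infty}(\overline{\Omega})$ to be dense in $W^{ks}(\Omega)$, and this density (via the segment property or an extension operator) is precisely where the Lipschitz hypothesis is used --- your proof never says where Lipschitz enters. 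Second, every non-membership claim you make ($f \notin W^{ks}(\Omega)$, $f \notin \widetilde{W}^s(\Omega)$) tacitly uses that an element of a completion-defined space, viewed as an $L^2(\Omega)$ function, has all the corresponding distributional derivatives in $L^2(\Omega)$; this is true because the derivatives of an approximating sequence converge both in $L^2$ and distributionally, but it deserves one sentence, since without it ``some admissible derivative is not in $L^2$'' does not formally exclude membership in a completion. With those two clarifications your argument is complete and, unlike the paper, self-contained.
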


As a corollary, it is possible to identify the Bergman space and the associated Bergman projection on product domains.	

\begin{lem}\label{Condition R product domains}
Let $\Omega_{j} \subset \mathbb{C}^{n_{j}}$ be bounded pseudoconvex domains. Then,
\begin{enumerate}
\item[(i)] the Bergman space $H^{2}(\Omega) = \widehat{\bigotimes}H_{j}^{2}(\Omega_{j})$.
\item[(ii)]  \label{Condition R product domains:two} The Bergman projection $P_{\Omega} = P_{1}\widehat{\otimes}\ldots\widehat{\otimes} P_{k}$ where $P_j$ is the Bergman projection 
associated to $\Omega_j$.
\item[(iii)]  \label{Condition R product domains:three} If each $\Omega_{j}$ is Lipschitz and satisfies Condition R, then so does $\Omega$. 
\end{enumerate}
\end{lem}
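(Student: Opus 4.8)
The plan is to prove the three parts of Lemma~\ref{Condition R product domains} in order, using the Hilbert tensor product machinery developed just above together with Lemma~\ref{Partial Sobelov space containment}. Throughout I will use the stated isometry $\widehat{\bigotimes}_{j=1}^k \mathfrak{F}_j(\Om_j) \approx \mathfrak{F}(\Om_1 \times \cdots \times \Om_k)$ applied to ordinary functions (zero-forms), which identifies $\widehat{\bigotimes}_{j=1}^k L^2(\Om_j)$ with $L^2(\Om)$ as an isometry.

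For part (i), I would first observe that each $H^2(\Om_j)$ is a closed subspace of $L^2(\Om_j)$, so the Hilbert tensor product $\widehat{\bigotimes}_j H^2(\Om_j)$ sits naturally inside $\widehat{\bigotimes}_j L^2(\Om_j) \cong L^2(\Om)$ as a closed subspace. It therefore suffices to identify this subspace with $H^2(\Om)$. One inclusion is easy: a tensor product $f_1 \otimes \cdots \otimes f_k$ of holomorphic functions $f_j \in H^2(\Om_j)$ is visibly holomorphic on $\Om$ and square integrable, and such decomposable tensors span a dense subspace of $\widehat{\bigotimes}_j H^2(\Om_j)$, so $\widehat{\bigotimes}_j H^2(\Om_j) \subseteq H^2(\Om)$. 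For the reverse inclusion I would use density: the key point is that finite sums of tensor products of holomorphic functions are dense in $H^2(\Om)$. This can be seen by expanding a holomorphic $L^2$ function in a suitable orthonormal basis, or by approximating on compact subsets via power series in the separate variable groups and passing to the $L^2$ limit; pseudoconvexity of each factor guarantees $H^2(\Om_j)$ is nontrivial and well-behaved.

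For part (ii), the Bergman projection $P_\Om$ is by definition the orthogonal projection of $L^2(\Om)$ onto $H^2(\Om)$. Under the identification of part (i), the tensor product operator $P_1 \widehat{\otimes} \cdots \widehat{\otimes} P_k$ is a bounded operator on $L^2(\Om)$ whose range is $\widehat{\bigotimes}_j H^2(\Om_j) = H^2(\Om)$, and on decomposable tensors it acts as $f_1 \otimes \cdots \otimes f_k \mapsto P_1 f_1 \otimes \cdots \otimes P_k f_k$. Since each $P_j$ is an orthogonal projection (self-adjoint idempotent), the tensor product is again self-adjoint and idempotent, hence an orthogonal projection; as its range is exactly $H^2(\Om)$, it must equal $P_\Om$ by uniqueness of the orthogonal projection onto a closed subspace. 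I would verify the self-adjoint and idempotent properties on decomposable tensors and extend by density and boundedness.

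Part (iii) is where the real work lies, and I expect it to be the main obstacle. The strategy is to use the equivalent characterization of Condition R via boundedness of $P$ on Sobolev spaces, combined with Lemma~\ref{Partial Sobelov space containment}. Since each $\Om_j$ satisfies Condition R, for each $s$ there is $m_j(s)$ so that $P_j : W^{s+m_j(s)}(\Om_j) \to W^s(\Om_j)$ is bounded. The natural object controlled by the tensor product $P_\Om = P_1 \widehat{\otimes} \cdots \widehat{\otimes} P_k$ is the \emph{partial} Sobolev scale: because $\widetilde{W}^s(\Om) = \widehat{\bigotimes}_j W^s(\Om_j)$, the tensor product map $P_\Om = \widehat{\bigotimes}_j P_j$ sends $\widetilde{W}^{s+M}(\Om) \to \widetilde{W}^s(\Om)$ boundedly, where $M = \max_j m_j(s)$, simply as a tensor product of bounded maps. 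The crux is then to convert this partial-Sobolev boundedness into ordinary-Sobolev boundedness using the continuous inclusions $W^{ks}(\Om) \subsetneq \widetilde{W}^s(\Om) \subsetneq W^s(\Om)$ from Lemma~\ref{Partial Sobelov space containment}. Composing, given a target $W^s(\Om)$, I would map $W^{k(s+M)}(\Om) \hookrightarrow \widetilde{W}^{s+M}(\Om) \xrightarrow{P_\Om} \widetilde{W}^s(\Om) \hookrightarrow W^s(\Om)$, yielding $P_\Om : W^{k(s+M)}(\Om) \to W^s(\Om)$ bounded, which is exactly Condition R with $m(s) = k(s+M) - s$. The delicate point requiring care is that the tensor product estimate genuinely holds on the completed Hilbert tensor product and that the index $M$ can be chosen uniformly over the finitely many factors; pseudoconvexity enters to ensure each $P_j$ and hence $P_\Om$ is the honest Bergman projection appearing in the Condition R statement.
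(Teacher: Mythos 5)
Your proposal is correct, and for part (iii) --- the only part the paper itself proves --- your argument coincides with the paper's: set $M = \max_j m_j(s)$, use part (ii) to get boundedness of $P_{\Omega} = P_1 \widehat{\otimes} \cdots \widehat{\otimes} P_k$ from $\widetilde{W}^{s+M}(\Omega)$ to $\widetilde{W}^{s}(\Omega)$ as a tensor product of bounded maps, and then sandwich with the continuous inclusions $W^{k(s+M)}(\Omega) \hookrightarrow \widetilde{W}^{s+M}(\Omega)$ and $\widetilde{W}^{s}(\Omega) \hookrightarrow W^{s}(\Omega)$ of Lemma \ref{Partial Sobelov space containment}, obtaining exactly the paper's conclusion $P_{\Omega} : W^{ks+km(s)}(\Omega) \to W^{s}(\Omega)$. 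The genuine difference is in parts (i) and (ii): the paper does not prove these, treating them as consequences of the results of \cite{CS} on product domains, whereas you supply self-contained Hilbert-space arguments --- density of decomposable holomorphic tensors for (i), and uniqueness of the orthogonal projection onto a closed subspace (a self-adjoint idempotent with range $H^2(\Omega)$ must equal $P_{\Omega}$) for (ii). Both arguments are standard and sound, and they buy self-containedness, but two caveats: the reverse inclusion in (i) should be done via the orthonormal-basis/slicing argument (show products of basis elements form a complete orthonormal system in $H^2(\Omega)$ using Fubini and the fact that the slice inner products are holomorphic functions of the remaining variables); your alternative suggestion of ``power series in the separate variable groups'' does not work as stated, since power series converge only locally and the $\Omega_j$ are not assumed to be Reinhardt. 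Also, pseudoconvexity plays no real role in (i) or (ii) --- any bounded domain has a nontrivial, closed $H^2$ --- so your remark that it ensures $H^2(\Omega_j)$ is ``nontrivial and well-behaved'' is a red herring inherited from the hypotheses of \cite{CS} rather than a needed ingredient.
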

Let us quickly indicate a proof of (iii). Since each $\Omega_j$ satisfies Condition R, it follows that for each $s\geq 0$, there is a non-negative integer $m_j(s)$,  $ 1\leq j \leq k$ 
such that 
\[ P_j : W^{s +m_j(s)} \rightarrow W^{s}(\Omega_j)\]
is continuous. If $m(s) = \max(m_1(s), m_2(s), \ldots, m_{k}(s))$, then each $P_j$ is continuous from $W^{s +m(s)}$ to $W^{s}(\Omega_j)$. It follows from \ref{Condition R product domains:two}  that 
\[P_{\Omega} : \widetilde{W}^{s +m(s)}(\Omega) \rightarrow \widetilde{W}^{s}(\Omega)\]
is continuous and by Lemma \ref{Partial Sobelov space containment}, we conclude that 
\begin{equation}
P_{\Omega} : W^{ks + km(s)}(\Omega) \rightarrow W^{s}(\Omega) \label{Bergman projection and Sobelov spaces}
\end{equation}
is continuous. This shows that the product domain $\Omega$ also satisfies Condition R if each factor does.	

\begin{thm}
Let $\Omega = \Omega_{1} \times \ldots \times \Omega_k$ be a product of smoothly bounded domains each of which satisfies Condition R. Then the linear span of 
$\{K_{\Omega}(z, a) : a \in \Omega\}$ is dense in $A^{\infty}(\Omega)$ in the $C^{\infty}$-topology.
\end{thm}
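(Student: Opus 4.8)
The plan is to exploit the tensor-product structure set up in this section, reducing the assertion to the single-factor case and then reassembling. The starting point is that the Bergman kernel of a product domain factors as the product of the factor kernels, reflecting $H^2(\Om)=\widehat{\bigotimes}_{j}H^2(\Om_j)$ (cf. Lemma \ref{Condition R product domains}):
\[
K_{\Om}(z, a) = \prod_{j=1}^k K_{\Om_j}(z_j, a_j), \qquad z=(z_1,\dots,z_k),\ a=(a_1,\dots,a_k).
\]
Consequently the linear span $S := \mathrm{span}\{K_\Om(\cdot, a) : a \in \Om\}$ is \emph{exactly} the algebraic tensor product $\bigotimes_{j=1}^k S_j$, where $S_j := \mathrm{span}\{K_{\Om_j}(\cdot, a) : a \in \Om_j\}$, since each decomposable tensor $\bigotimes_j K_{\Om_j}(\cdot,a_j)$ equals $K_\Om(\cdot,a)$. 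The goal thus splits into two tasks: (i) each $S_j$ is dense in $A^{\infty}(\Om_j)$; and (ii) density in each factor propagates through the algebraic tensor product to density of $S$ in $A^{\infty}(\Om)$.

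For (i), fix a smoothly bounded $\Om_j$ satisfying Condition R. Bell's density lemma (\cite{Be-densitylemma}) already gives that the \emph{full} Bergman span $\cal K_{\Om_j}=\mathrm{span}\{K^{(\al)}_{\Om_j}(\cdot,a)\}$, now including all derivatives in the conjugate variable, is dense in $A^{\infty}(\Om_j)$. It therefore suffices to show that every $K^{(\al)}_{\Om_j}(\cdot,a)$ already lies in the $C^{\infty}(\ov{\Om_j})$-closure of $S_j$. I would realize each such derivative as a limit of difference quotients in the $\ov a$ variables of the kernel functions $K_{\Om_j}(\cdot,a')$; each difference quotient is a finite linear combination of kernel functions and hence lies in $S_j$. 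The convergence of these quotients in the Fr\`echet topology of $C^{\infty}(\ov{\Om_j})$ is precisely the statement that, on a relatively compact subset of $\Om_j$, the map $a \mapsto K_{\Om_j}(\cdot,a)$ into $C^{\infty}(\ov{\Om_j})$ is smooth (indeed conjugate-holomorphic). Here Condition R re-enters: for such domains $K_{\Om_j}(z,a)$ extends smoothly to $\ov{\Om_j}\times\ov{\Om_j}$ off the boundary diagonal, so $a\mapsto K_{\Om_j}(\cdot,a)$ is a smooth $C^{\infty}(\ov{\Om_j})$-valued map on compacta. Granting this, $\cal K_{\Om_j}\subset\ov{S_j}$, and density of $S_j$ follows.

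For (ii), I would first record that $\bigotimes_j A^{\infty}(\Om_j)$ is dense in $A^{\infty}(\Om)$. Given $h\in A^{\infty}(\Om)$, note $h=P_\Om h$; approximate $h$ in $C^{\infty}(\ov\Om)$ by $g\in\bigotimes_j C^{\infty}(\ov{\Om_j})$, legitimate since the latter is dense in $C^{\infty}(\ov\Om)$ as recalled above. Applying $P_\Om=P_1\widehat{\otimes}\cdots\widehat{\otimes}P_k$ (Lemma \ref{Condition R product domains}) and using that $\Om$ again satisfies Condition R, so $P_\Om$ is continuous in the $C^{\infty}$ topology (cf. \eqref{Bergman projection and Sobelov spaces}), gives $P_\Om g\to P_\Om h=h$ while $P_\Om g=\sum_i (P_1 f_1^i)\otimes\cdots\otimes(P_k f_k^i)\in\bigotimes_j A^{\infty}(\Om_j)$. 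Next, density of each $S_j$ in $A^{\infty}(\Om_j)$ passes to the algebraic tensor product: a decomposable tensor $f_1\otimes\cdots\otimes f_k$ is approximated by $g_1\otimes\cdots\otimes g_k$ with $g_j\in S_j$ near $f_j$, and joint continuity of the tensor product in the $C^{\infty}$ topology (the Leibniz rule bounds the $C^s$-norm of a product by the product of the $C^s$-norms) yields convergence in $C^{\infty}(\ov\Om)$. Chaining the two density statements shows $S=\bigotimes_j S_j$ is dense in $A^{\infty}(\Om)$.

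The step I expect to be the main obstacle is the smooth dependence of the factor kernels on the interior variable used in (i): one must know that $a\mapsto K_{\Om_j}(\cdot,a)$ is smooth into the boundary-$C^{\infty}$ topology, so that the difference quotients converge there and the derivative kernels $K^{(\al)}_{\Om_j}(\cdot,a)$ are captured by $\ov{S_j}$. An alternative avoiding the simultaneous use of Bell's density lemma and this kernel-regularity result would argue directly on the product: use the Bell operator $\Phi^s_\Om$ (available by \cite{CS}) to write $h=P_\Om\Phi^s_\Om h$ with $\Phi^s_\Om h\in W^s_0(\Om)$ vanishing to high order at $\pa\Om$, represent $h=\int_\Om K_\Om(\cdot,w)\,(\Phi^s_\Om h)(w)\,dV(w)$, and approximate this vector-valued integral by Riemann sums lying in $S$. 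In that route the delicate point migrates to controlling the boundary blow-up of $\norm{K_\Om(\cdot,w)}_{C^M(\ov\Om)}$ against the vanishing of $\Phi^s_\Om h$ so that the integral converges in $C^{\infty}(\ov\Om)$ — exactly the interplay the Bell operator is designed to exploit.
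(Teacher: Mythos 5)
Your proposal is correct, but it takes a genuinely different route from the paper's. The paper transplants Bell's original argument wholesale to the product domain: it assembles the Bell operator $\Phi^s_{\Omega}=\Phi^s_1\widehat{\otimes}\cdots\widehat{\otimes}\Phi^s_k$ via the partial Sobolev inclusions of Lemma \ref{Partial Sobelov space containment}, writes $f=P_{\Omega}\Phi^s_{\Omega}f$ with $\Phi^s_{\Omega}f\in W^s_0(\Omega)$, represents $K_{\Omega}(\cdot,a)=P_{\Omega}\psi_a$ for bump functions $\psi_a$, shows the span of $\{\psi_a\}$ is dense in $W^s_0(\Omega)$ by Riemann-sum approximation of mollifications, and closes with the continuity \eqref{Bergman projection and Sobelov spaces} of $P_{\Omega}$ supplied by Condition R for $\Omega$. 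You instead factor the kernel, identify the span $S$ with the algebraic tensor product $\bigotimes_j S_j$, settle each smoothly bounded factor separately, and propagate density through the tensor product using the density of $\bigotimes_j C^{\infty}(\ov\Omega_j)$ in $C^{\infty}(\ov\Omega)$, the identity $P_{\Omega}=P_1\widehat{\otimes}\cdots\widehat{\otimes}P_k$, the $C^{\infty}(\ov\Omega)$-continuity of $P_{\Omega}$ from Lemma \ref{Condition R product domains}(iii), and the multilinear (Leibniz) continuity of the tensor product in the $C^s$ norms; all of these steps are sound. What your route buys: the product Bell operator \eqref{Bell operator continuous} is never needed, the only input from \cite{CS} being Condition R for products, and the hard analysis is delegated to the classical one-factor case. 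What the paper's route buys: it is self-contained and proves the one-factor and product cases by one and the same argument, with no appeal to regularity of $K_{\Omega_j}(\cdot,a)$ in the second variable. On that last point, your flagged ``main obstacle'' is real but surmountable --- local uniform bounds on $\norm{K_{\Omega_j}(\cdot,w)}_{C^M(\ov\Omega_j)}$ for $w$ in compacta follow from Condition R via $K_{\Omega_j}(\cdot,w)=P_j\psi_w$ and Sobolev embedding, after which Cauchy estimates give convergence of the difference quotients --- and in fact it is dispensable: the lemma of \cite{Be-densitylemma} is, in its original form, the density of the span of the kernel functions themselves (this is exactly what yields non-vanishing of the kernel there), so your step (i) is immediate by citation and no derivative kernels need to be recaptured. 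Finally, note that the ``alternative'' you sketch in your last paragraph is, almost verbatim, the paper's actual proof.
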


\begin{proof}
For $s\geq 0$ and $1\leq j \leq k$, let 
\[\Phi^{s}_{j} : W^{s + \nu(s)}(\Omega_j) \rightarrow W^s_0{(\Omega_j)} \]
be the Bell operator of order $s$ for $\Omega_j$ that satisfies $P_j \Phi_j^{s} = P_j$ where $P_{j}$ is the Bergman projection associated to $\Omega_j$. The Bell operator for $\Omega$ of order $s$ may be obtained in the following way. Define
\[ \Phi^{s}_{\Omega} = \Phi_1^s\widehat{\otimes}\ldots\widehat{\otimes}\Phi_m^s \]
which is apriori continuous from $\widetilde{W}^{s +\nu(s)}(\Omega)$ to $\widetilde{W}^{s}_{0}(\Omega)$. By Lemma \ref{Partial Sobelov space containment}, it follows that 
\begin{equation}
\Phi_{\Omega}^{s} : W^{s + N(s)}(\Omega) \rightarrow W^{s}_{0}(\Omega) \label{Bell operator continuous}
\end{equation}
is continuous, where $N(s) = k\nu(s)+(k-1)s$. If $P_{\Omega}$ is the Bergman projection for $\Omega$, Lemma \ref{Condition R product domains} shows that
\begin{align*}
P_{\Omega}\Phi^s &= (P_1 \widehat{\otimes} \ldots \widehat{\otimes} P_m)(\Phi_1^s \widehat{\otimes}\ldots\widehat{\otimes}\Phi_m^s) \\
 &= P_1 \widehat\Phi_1^s \widehat{\otimes}\ldots\widehat{\otimes} P_m \Phi_m^s \\
 &= (P_1 \widehat{\otimes}\ldots\widehat{\otimes} P_m) \\
 &= P_{\Omega}.
\end{align*}
For $f \in A^{\infty}(\Omega)$ and every $s\geq 0$, we have 
\begin{equation}
f = P_{\Omega}f =  P_{\Omega}\Phi^s_{\Omega} f \label{bergman projection and bell operator on H infinity of omega}
\end{equation}
where $\Phi^s_{\Omega}f	 \in \widetilde{W}^{s}_{0}(\Omega) \subset W^{s}_{0}(\Omega)$. Hence $A^{\infty}(\Omega) \subset P_{\Omega}(W^s_0(\Omega))$ for every  $s\geq 0$.

\medskip

It is known that the Bergman kernel $K_{\Omega}(z, a) = P_{\Omega}\psi_a(z)$ where $\psi_a$ is a radially symmetric function with compact support around $a \in \Omega$ such that $\int_{\Omega} \psi_a dV = 1$.  As a byproduct, Lemma \ref{Condition R product domains} shows that $\Omega$ satisfies Condition R and hence $K_{\Omega}(z, a) \in A^{\infty}(\Omega)$ as a function of $z$ for each $a \in \Omega$. Let $\Psi = \{\psi_a : a \in \Omega\}$ and note that $P_{\Omega}\Psi = \{K_{\Omega}(z, a) : a \in \Omega \}$. 

\medskip

To prove the theorem, it suffices to show that the complex linear span of $\Psi$ is dense in $W^{s}_{0}(\Omega)$ for each $s$. Assume this for now and pick $f \in A^{\infty}(\Omega)$. Let $s\geq 0 $ be fixed. In \eqref{Bergman projection and Sobelov spaces}, let $M(s) = ks + km(s)$. Since $f \in A^{\infty}(\Omega)$, we may regard $f$ as an element of $W^{M(s)+N(M(s))}(\Omega)$ and note that by \eqref{bergman projection and bell operator on H infinity of omega}, we get 
\[ f = P_{\Omega}f = P_{\Omega}\Phi_{\Omega}^{M(s)}f \]
where $\Phi_{\Omega}^{M(s)}f \in W_{0}^{M(s)}(\Omega)$.

\medskip

Let $g$ belong to the span of $\Psi$. Then 
\begin{align*}
\Vert f - P_{\Omega}g \Vert_{W^{s}(\Omega)} &= \Vert P_{\Omega}(\Phi^{M(s)}_{\Omega}f - g )\Vert_{W^{s}(\Omega)}\\
&\lesssim \Vert \Phi^{M(s)}_{\Omega}f - g \Vert_{W^{M(s)}(\Omega)}
\end{align*}
where the inequality is a consequence of \eqref{Bergman projection and Sobelov spaces}. Thus, if $g$ is close to $\Phi^{M(s)}_{\Omega}f$ (note that both $g$ and $\Phi^{M(s)}_{\Omega}f$ are in $W^{M(s)}_{0}(\Omega)$!) in $W^{M(s)}_{0}(\Omega)$, then $f$ will be close to $P_{\Omega}g$ in $W^{s}(\Omega)$. It remains to observe that $P_{\Omega}g$ is a linear combination of functions of $z$ of the form $K_{\Omega}(z,a)$ where $a \in \Omega$.

\medskip

To complete the proof, we show that the complex linear span of $\Psi$ is dense in $W^{s}_{0}(\Omega)$ for each $s$. For this it suffices to show that each $f \in C^{\infty}_{0}(\Omega)$ is contained in the closure of $\Psi$ since $W^{s}_{0}(\Omega)$ is by definition the closure of $C^{\infty}_{0}(\Omega)$ in $W^{s}(\Omega)$. Let $\eta$ be a smooth radially symmetric function with support in the unit ball such that $\int_{\mathbb{C}^{n}} \eta = 1$. Let $f_{\epsilon} = f\ast\eta_{\epsilon}$ where $\eta_{\epsilon} = \epsilon^{-2n}\eta(z/\epsilon)$. Then $f_{\epsilon}$ converges to $f$ in $W^{s}_{0}(\Omega)$ and by approximating the integral representation 
\[ f_{\epsilon}(z) = \int\limits_{\Omega}f(y)\eta_{\epsilon}(z-y)dV_{y} \]
by finite Riemann sums of the form
\[\sum c_{\nu}f(\tau_{\nu})\eta_{\epsilon}(z-\tau_{\nu})  \]
for certain constants $c_{\nu}$ and points $\tau_{\nu} \in \Omega$, it follows that 
\[ f(z) \approx f_{\epsilon}(z) \approx \sum c_{\nu}f(\tau_{\nu})\eta_{\epsilon}(z-\tau_{\nu}). \]
Further, since the modulus of the continuity of the function $y \rightarrow f(y)\phi_{x}^{(\alpha)}(x-y)$ can be bounded independently of $x$ and  multi-indices $\alpha$ for $|\alpha| \leq s$, it follows that there exists a finite Riemann sum as above which approximates $f$ in the $W^{s}_{0}(\Omega)$-norm. To conclude, note that each such Riemann sum is in $\Psi$.
\end{proof}

\begin{cor}\label{density of Bergman span}
The Bergman span $\mathcal{K}_{\Omega}$ is dense in $A^{\infty}(\Omega)$ in the $C^{\infty}$-topology.
\end{cor}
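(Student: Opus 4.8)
The plan is to derive this at once from the theorem just proved, the only additional observation being a containment of linear spans. Recall that $\mathcal{K}_{\Omega}$ is by definition the complex linear span of all functions of $z$ of the form $K_{\Omega}^{(\alpha)}(z, a)$, as $a$ ranges over $\Omega$ and $\alpha$ over all multi-indices with $|\alpha| \geq 0$. Taking in particular $\alpha = 0$, we see that the complex linear span of $\{K_{\Omega}(z,a) : a \in \Omega\}$ is a linear subspace of $\mathcal{K}_{\Omega}$.

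By the preceding theorem, this smaller span is already dense in $A^{\infty}(\Omega)$ in the $C^{\infty}$-topology. Since any superset of a dense subset is itself dense, it follows immediately that $\mathcal{K}_{\Omega}$ is dense in $A^{\infty}(\Omega)$ in the $C^{\infty}$-topology, which is precisely the assertion of the corollary.

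Accordingly, there is no real obstacle to overcome here: the entire analytic content---the Bell operator on the product domain, the Sobolev mapping estimates of \eqref{Bergman projection and Sobelov spaces} and \eqref{Bell operator continuous}, and the approximation of mollified test functions by Riemann sums of the kernels $K_{\Omega}(z, \tau_{\nu})$---has already been carried out in proving the theorem. The corollary simply records that passing from the kernels to the kernels together with all their conjugate derivatives enlarges the spanning set and hence can only preserve density. One could instead attempt to prove the statement for $\mathcal{K}_{\Omega}$ directly, but since the derivatives $K_{\Omega}^{(\alpha)}(z,a)$ with $|\alpha| > 0$ play no role in the approximation, the containment argument above is the most economical route.
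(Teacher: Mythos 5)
Your proposal is correct and matches the paper's intent exactly: the paper states the corollary without proof precisely because, as you observe, the span of $\{K_{\Omega}(z,a) : a \in \Omega\}$ is contained in $\mathcal{K}_{\Omega}$, so density of the smaller set in $A^{\infty}(\Omega)$ immediately gives density of the Bergman span. No gap here; the containment argument is the intended one.
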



\section{Proof of Theorem \ref{Main theorem} and Theorem 1.4}

\noindent Let $D \subset \mathbb{C}^{n-1}, \Omega \in \mathbb{C}$ be as in Theorem \ref{Main theorem}. As explained earlier, the problem is to find a single valued holomorphic function $g$ on $D\times\Omega$ such that 
\[ \frac{\partial g}{\partial z_n} = u \]
for a given $u \in \mathcal{K}_{D\times \Omega}$.

\medskip

First suppose that $\Omega$ is simply connected. Pick a base point $a \in \Omega$ and let $\gamma$ be a path in $\Omega$ joining $a$ to an arbitrary point $z_n \in \Omega$. Then
\begin{equation}
g(z',z_n) = \int\limits_{\gamma}u(z',\lambda)\;d\lambda + a \label{Partial differential equation solution}
\end{equation}
is well defined, i.e., the integral is independent of $\gamma$ since $\Omega$ is simply connected. Moreover, $g$ is holomorphic in the $z'$ variable and also with respect to $z_n$. By 
Hartogs' theorem, it follows that $g$ is holomorphic on $D\times\Omega$. The fact that $g$ is  a solution to \eqref{Partial Differential Equation} is evident. Since 
\[
K_{D\times\Omega}\big((z',z_n),(w',w_n)\big) = K_{D}(z',w')\; K_{\Omega}(z_n,w_n)
\]
and $ K_{D}(z',w') \in A^{\infty}(D), K_{\Omega}(z_n,w_n) \in A^{\infty}(\Omega)$ as a function of $z'$ and $z_n$ respectively, it follows that each member of 
$\mathcal{K}_{D\times\Omega} \in A^{\infty}(D\times\Omega)$. In particular $u \in A^{\infty}(D \times\Omega)$. 
It follows that $g$ defined by \eqref{Partial differential equation solution} 
is in $A^{\infty}(D\times\Omega)$.

\medskip

Now, the holomorphic mapping $f$ defined in \eqref{Graph of f a proper mapping}, i.e.,
\[ f(z) = (z_1, z_2, \ldots, z_{n-1}, g(z))\]
satisfies
\begin{align}
f(z) - (z_1, z_2, \ldots, z_{n-1}, z_n) &= \big(0,\ldots,0,\int\limits_{a}^{z_n}u(z',\lambda)\;d\lambda + a-z_n \nonumber \big) \\
&=  \big(0,\ldots,0,\int\limits_{a}^{z_n}\big(u(z',\lambda)-1\big)\;d\lambda\big).  \label{PDE closed form}
\end{align}

If $u \in \mathcal{K}_{D\times\Omega}$ is arbitrarily close to the constant function $h(z) \equiv 1$ in $A^{\infty}(D\times\Omega)$, then \eqref{PDE closed form} 
shows that $f$ is also arbitrarily close to the identity 
map in $A^{\infty}(D\times\Omega)$. By Proposition \ref{quadrature domains and proper mappings}, it 
follows that $f(D\times\Omega)$ is a quadrature domain that is as close to $D\times\Omega$ as desired.

\medskip

\noindent \textit{\textbf{Proof of Proposition \ref{Solution to PDE}:} }Let $u \in A^{\infty}(D\times\Omega)$. We would like to find a function $v\in A^{\infty}(D\times\Omega)$ such that 
\[ \frac{\partial g}{\partial z_n} = v \]
admits a single valued solution $g:D\times\Omega \rightarrow \mathbb{C}$. Let $\gamma_1, \gamma_2,\ldots,\gamma_{m-1}$ be the inner boundary components of $\Omega$. For a fixed $z' \in D$, the periods of $u(z',z_n)$ as a function of $z_n$ around the $\gamma_j$'s are given by 
\[ c_j(z') = \int\limits_{\gamma_j}u(z',z_n)\; dz_n \]
for $1\leq j \leq m-1$. In particular, the periods $c_j(z') \in A^{\infty}(D)$ since $u \in A^{\infty}(D\times\Omega)$. Let us denote the column matrix 
$(c_1(z'),c_2(z'),\ldots,c_{m-1}(z'))^T$ by $C(z')$. By Lemma 2.1 in \cite{Be-density}, there are points $\z_1, \z_2,\ldots, \z_{m-1} \in \Omega$ such that the $(m-1)\times(m-1)$ matrix 
of periods, 
say $M$ whose $(i,j)$-th entry is 
\[ 
\int\limits_{\gamma_i}K_{\Omega}(z_n, \z_j)\;dz_n
\]
is non-singular. Let
\begin{equation}
v = u - \sum_{j =1}^{m-1}a_j(z')K_{\Omega}(z_n, \z_j) \label{changing u for solution of PDE}
\end{equation}
where $a_1(z'),a_2(z'),\ldots,a_{m-1}(z')$ are unknown holomorphic functions to be determined later. Denote by $A(z')$ the column matrix $(a_1(z'),a_2(z'),\ldots,a_{m-1}(z'))^T$. 
In order that $v$ has zero periods, we must have 
\[ 
0 = \int\limits_{\gamma_i}v(z',z_n)\;dz_n  = c_i(z')- \sum_{j =1}^{m-1}a_j(z')\int\limits_{\gamma_i}K_{\Omega}(z_n, \z_j)\;dz_n 
\]
for all $1\leq i \leq m-1$ and each fixed $z' \in D$. In matrix form, these equations are equivalent to
\[
C(z') = M A(z') 
\]
from which we can solve for $A(z')$ since $M$ is non-singular. Note that the components of $A(z') \in A^{\infty}(D)$ and hence $v \in A^{\infty}(D\times\Omega)$. By repeating the arguments given earlier for the case of a simply connected $\Omega$, we see that 
\[ \frac{\partial g}{\partial z_n} = v \]
has a single valued solution given by \eqref{Partial differential equation solution}. Further, if $f(z') = (z_1,z_2,\ldots,z_{n-1},g(z))$, then  \eqref{PDE closed form} shows that 
\begin{equation}
f(z) - (z_1,z_2,\ldots,z_{n-1},z_n) = \big(0,\ldots,0, \int\limits_{a}^{z_n}(v(z',\lambda) - 1)\;d\lambda \big) \label{last labelled equation}
\end{equation}
where $a\in \Omega$ is an arbitrary base point.	 

\medskip

If $D,\Omega$ are as in Theorem \ref{Main theorem},  it follows that $\mathcal{K}_{D\times\Omega}$ is dense in $ A^{\infty}(D\times\Omega)$ in the $C^{\infty}$-topology. Therefore, we may choose $u \in \mathcal{K}_{D\times\Omega}$ that is as close to the constant function $h(z)\equiv 1$ in $ A^{\infty}(D\times\Omega)$ as desired. Thus the matrix $C(z')$ has arbitrarily small norm uniformly for all $z'\in D$. The same holds for the matrix $A(z')$ since $M$ is a constant matrix. That $M$ is a constant matrix has one more consequence, namely that $v\in \mathcal{K}_{D\times\Omega}$. Indeed, by \eqref{changing u for solution of PDE}, we need to focus only on 
\[ 
\sum\limits_{j=1}^{m-1}a_j(z')K_{\Omega}(z_n,\z_j).
\]
The functions $a_j(z')$ are linear combinations of the functions $c_j(z')$. Since $u \in \mathcal{K}_{D\times\Omega}$, it is built up of terms of the form
\[ 
K^{(\alpha)}_{D\times\Omega}(z,w) = \big(K_{D}(z',w') \; K_{\Omega}(z_n,w_n)\big)^{(\alpha)} 
\]
which by the product rule involves only mixed terms of the form	
\[ 
\frac{\partial^{\alpha_1+\ldots+\alpha_{n-1}}}{\partial\overline w_1^{\alpha_1}\ldots\partial\overline w_{n-1}^{\alpha_{n-1}}}K_{D}(z',w')	
\; \frac{\partial^{\alpha_n}}{\partial\overline w_n^{\alpha_n}}K_{\Omega}(z_n,w_n)	 
\] 
if $\alpha = (\alpha_1,\ldots,\alpha_n)$. In particular, no terms that involve only $K_{D}(z',w')$ (along with its derivatives) or $K_{\Omega}(z_n,w_n)$ (along with its derivatives) 
occur. This shows that apart from constants
\begin{align*}
a_j(z')K_{\Omega}(z_n,\z_j) &\sim \sum c_i(z')K_{\Omega}(z_n,\z_j) \\
&\sim \sum\bigg(\int\limits_{\gamma_i} u(z',z_n)\;dz_n\bigg)K_{\Omega}(z_n,\z_j)	 \\
&\sim \sum\bigg(\int\limits_{\gamma_i} K_{D\times\Omega}^{(\alpha)}(z,w)\;dz_n\bigg)K_{\Omega}(z_n,\z_j)	 \\
&\sim \sum\bigg(\frac{\partial^{\alpha_1+\ldots+\alpha_{n-1}}}{\partial\overline w_1^{\alpha_1}\ldots\partial\overline w_{n-1}^{\alpha_{n-1}}}K_{D}(z',w')\bigg)K_{\Omega}(z_n,\z_j)	 
\\
&\sim K_{D\times\Omega}^{(\alpha_1,\ldots,\alpha_{n-1},0)}(z, (w', \z_j)).
\end{align*}
Hence $v \in \mathcal{K}_{D\times\Omega}$ and by keeping track of the various finitely many constants in the sums above, we can ensure that $v$ is as close the constant function $h(z) 
\equiv 1$ in $A^{\infty}(D\times\Omega)$ by choosing $u$ sufficiently close to $h(z) \equiv 1$ in $A^{\infty}(D\times\Omega)$. By Proposition \ref{quadrature domains and proper mappings}, $f(D\times\Omega)$ (where $f$ is as in \eqref{last labelled equation}) is a quadrature domain that is arbitrarily close to $D\times\Omega$.

\medskip

The proofs of both statements in Theorem 1.4 are exactly the same as before even though the domains being considered are not product 
domains. However, there is a plane of symmetry over which the domains sit, the fibres being conformal discs in the plane. This is enough for the previously used arguments to go through. 
For the sake of definiteness, let us focus on $D\subset \mathbb{C}^2$, a smoothly bounded complete Hartogs domain. $D$ satisfies Condition R and admits a Bell operator. Assume that the 
plane of symmetry is $\{z_2 = 0\}$ and fix a base point, say the origin which is contained in all the discs that lie over the plane of symmetry. If $u \in \mathcal{K}_D$, let
\[ 
g(z_1,z_2) = \int\limits_{0}^{z_2}u(z_1,\lambda)\;d\lambda 
\]
which is a well defined holomorphic function on $D$ by Hartogs theorem. If $u$ is close to the constant function $h(z) \equiv 1$ in $A^{\infty}(D)$, the mapping $f$ defined by \eqref{Graph of f a proper mapping} is then a small perturbation of the identity whose range is a quadrature domain.


\section{Concluding Remarks}

\noindent The definition of a quadrature domain adopted in this note is equivalent to the condition that the constant function $h(z) \equiv 1$ belong to the Bergman span. It was 
suggested in \cite{Be-density} that a more interesting class of domains to study might be the ones for which all holomorphic polynomials in $\mathbb C^n$ belong to the Bergman span. In 
fact, 
for planar domains these two conditions were shown to be equivalent in \cite{Be-density} and this was done by using the fact that the associated Schwarz function $S(z) = \ov z$ on the 
boundary 
of a planar qudarature domain. We do not know whether this equivalence continues to hold in higher dimensions as well. It is however evident that the property that all holomorphic 
polynomials belong to the Bergman span is apriori more restrictive than simply demanding that the constant function $h(z) \equiv 1$ be in the Bergman span. Nevertheless, 
\cite{Be-circular1}, \cite{Be-circular2} 
show that complete circular domains containing the origin do enjoy the property that all holomorphic polynomials in $\mathbb C^n$ be in the associated Bergman span. In fact, we need 
only look at the complex linear span of functions of $z$ of the form $K(z, 0)$ and $K^{(\alpha)}(z, 0)$ and this in fact contains all the polynomials. These domains were christened 
\textit{one-point quadrature domains} in \cite{Be-density} for this reason. Further, it can be shown that if $f : \Om_1 \ra \Om_2$ is a biholomorphic polynomial mapping 
with unit Jacobian and polynomial inverse, where $\Om_2$ is a complete circular domain containing the origin, then $\Om_1$ is a one-point quadrature domain. Using this, it is possible to 
construct several explicit examples. For instance, we may take $\Om_2$ to be the unit ball $\mathbb B^n$ and $f$ a volume preserving H\'{e}non map of the form
\[
f(z, w) = (w, w^2 - z)
\]
which yields 
\[
f^{-1}(\mathbb B^n) = \big\{(z, w) : \vert z \vert^2 + \vert w \vert^2 + \vert w \vert^4 - 2 \Re(w^2 \ov z) < 1 \big\}. 
\]
Other examples can be constructed by replacing the term $w^2$ in the second component by an arbitrary polynomial $p(w)$. By considering volume preserving shift-like automorphisms of 
$\mathbb C^3$ of the form
\[
f(z_1, z_2, z_3) = (z_2, z_3, z_1 + q(z_3))
\]
where $q(z_3)$ is a polynomial, we get 
\[
f^{-1}(\mathbb B^n) = \big\{z \in \mathbb C^3 : \vert z_1 \vert^2 + \vert z_2 \vert^2 + \vert z_3 \vert^3 + \vert q(z_3) \vert^2 + 2 \Re(z_1 \ov{q(z_3)}) < 1 \big\}. 
\]
Similar examples can be constructed in $\mathbb C^k$ for $k \ge 3$.


\end{document}